\newcommand{\References}{references}
\newcommand{\ResourceFolder}{resources/}
\newif\ifCustomTheorems
\newcommand\RedeclareMathOperator{%
  \@ifstar{\def\rmo@s{m}\rmo@redeclare}{\def\rmo@s{o}\rmo@redeclare}%
}
\newcommand\rmo@redeclare[2]{%
  \begingroup \escapechar\m@ne\xdef\@gtempa{{\string#1}}\endgroup
  \expandafter\@ifundefined\@gtempa
     {\@latex@error{\noexpand#1undefined}\@ehc}%
     \relax
  \expandafter\rmo@declmathop\rmo@s{#1}{#2}}
\newcommand\rmo@declmathop[3]{%
  \DeclareRobustCommand{#2}{\qopname\newmcodes@#1{#3}}%
}
\newcommand{\N}{\mathds{N}}
\newcommand{\R}{\mathds{R}}
\newcommand{\Rp}{\R_{\geq0}}
\newcommand{\Rpp}{\R_{>0}}
\newcommand{\C}{\mathds{C}}
\newcommand{\dB}{\mathds{B}}
\newcommand{\Impl}{\Longrightarrow}
\newcommand{\fa}{\ \forall \, }
\newcommand{\ex}{\ \exists \, }
\newcommand{\rbl}{\left (}
\newcommand{\rbr}{\right )}
\newcommand{\sbl}{\left [}
\newcommand{\sbr}{\right ]}
\newcommand{\nl}{\left\|}
\newcommand{\nr}{\right\|}
\newcommand{\cbl}{\left\lbrace }
\newcommand{\cbr}{\right\rbrace }
\newcommand{\Norm}[2][ ]{\nl #2 \nr_{#1}}
\newcommand{\SNorm}[1]{\Norm[\infty]{#1}}
\newcommand{\setdef}[2]{\cbl\ #1\ \left|\ \vphantom{#1} #2\ \right.\cbr}
\newenvironment{smallbmatrix}%
{\left[\begin{smallmatrix}}%
{\end{smallmatrix}\right]}%
\newcommand{\GL}{\text{GL}}
\newcommand{\cC}{\mathcal{C}}
\newcommand{\cD}{\mathcal{D}}
\newcommand{\cU}{\mathcal{U}}
\newcommand{\cF}{\mathcal{F}}
\newcommand{\cG}{\mathcal{G}}
\newcommand{\cN}{\mathcal{N}}
\newcommand{\cM}{\mathcal{M}}
\newcommand{\cR}{\mathcal{R}}
\newcommand{\cY}{\mathcal{Y}}
\DeclareMathOperator*{\rf}{ref}
\DeclareMathOperator*{\esssup}{ess\,sup}
\DeclareMathOperator*{\loc}{loc}
\newcommand{\oT}{\textbf{T}}
\renewcommand{\l}{\lambda}
\newcommand{\me}{\mathrm{e}}
\newcommand{\G}{\Gamma}
\newcommand{\con}{\cC}
\RedeclareMathOperator*{\Im}{Im}
\RedeclareMathOperator*{\Re}{Re}
\renewcommand{\phi}{\varphi}
\renewcommand{\d}{\ \text{d}}
\newcommand{\ddt}{ \tfrac{\text{d}}{\text{d} t}}
\newcommand{\ve}{\varepsilon}
\newcommand{\vp}{\varphi}
\DeclareMathOperator*{\im}{im}
    \newtheorem{definition}{Definition}[section]
    \theoremstyle{definition}
    \newtheorem{remark}[definition]{Remark}\Crefname{remark}{Remark}{Remarks}
    \newtheorem{algo}[definition]{Algorithm}\Crefname{algo}{Algorithm}{Algorithms}
    \newtheorem{example}[definition]{Example}\Crefname{example}{Example}{Examples}
    \newtheorem{assn}[definition]{Assumption}\Crefname{assn}{Assumption}{Assumptions}
    \theoremstyle{plain}
    \newtheorem{prop}[definition]{Proposition}\Crefname{prop}{Proposition}{Propositions}
    \Crefname{corollary}{Corollary}{Corollaries}
    \Crefname{assertion}{Assertion}{Assertions}
    \newtheorem{theorem}[definition]{Theorem}\Crefname{theorem}{Theorem}{Theorems}
    \newtheorem{lemma}[definition]{Lemma}\Crefname{lemma}{Lemma}{Lemmata}
\newcommand{\email}[1]{\protect\href{mailto:#1}{#1}}
\newcommand\funding[1]{\protect{\bfseries Funding:} #1}
\newcommand\AMSname{AMS subject classifications}
\newenvironment{@abssec}[1]{%
     \if@twocolumn
       \section*{#1}%
     \else
       \vspace{.05in}\footnotesize
       \parindent .2in
         {\upshape\bfseries #1: }\ignorespaces 
     \fi}
     {\if@twocolumn\else\par\vspace{.1in}\fi}
\newenvironment{AMS}{\begin{@abssec}{\AMSname}}{\end{@abssec}}
\newcommand\keywordsname{Key words}
\newenvironment{keywords}{\begin{@abssec}{\keywordsname}}{\end{@abssec}}
\title{Robust Funnel Model Predictive Control \\ for output tracking with prescribed performance%
    \thanks{\funding{We gratefully acknowledge funding by the German Research Foundation (DFG; project number 471539468).}
}}
\author{Thomas Berger\thanks{Institut f\"ur Mathematik, Universit\"at Paderborn, Warburger Str.~100, 33098 Paderborn 
    (\email{thomas.berger@math.upd.de}, \email{dario.dennstaedt@uni-paderborn.de}).}
    \and Dario Dennstädt\footnotemark[2] \thanks{Institut f\"ur Mathematik, Technische Universit\"at Ilmenau, Weimarer Stra\ss e 25, 98693 Ilmenau, Germany 
    (\email{dario.dennstaedt@tu-ilmenau.de}, \email{lukas.lanza@tu-ilmenau.de}, \email{karl.worthmann@tu-ilmenau.de}).}
    \and Lukas Lanza\footnotemark[3]
    \and Karl Worthmann\footnotemark[3]
}
\begin{document}

\maketitle
\begin{abstract}
\noindent
We propose a novel robust Model Predictive Control (MPC) scheme 
for nonlinear multi-input multi-output systems of relative degree one with stable internal dynamics.
The proposed algorithm is a combination of funnel MPC, i.e., MPC with a particular stage cost, and the model-free adaptive funnel controller. 
The new robust funnel MPC scheme guarantees output tracking of reference signals within prescribed performance bounds --~even in the presence of unknown disturbances and a structural model-plant mismatch. 
We show initial and recursive feasibility of the proposed control scheme without imposing
terminal conditions or any requirements on the 
prediction horizon.
Moreover, we allow for model updates at runtime. To this end, we propose a proper initialization strategy, which ensures that recursive feasibility is preserved.
Finally, we validate the performance of the proposed robust MPC scheme by simulations.
\end{abstract}

\begin{keywords}
model predictive control, funnel control, nonlinear systems, reference tracking, robustness, model-plant mismatch, prescribed performance
\end{keywords}

\begin{AMS}
93B45, 93C40, 93B51, 93B52
\end{AMS}

\section{Introduction}
Model Predictive Control~(MPC) is a  well-established  control technique for linear 
and nonlinear systems  due to its ability to handle
multi-input multi-output systems under control and state constraints, see, e.g., the textbooks~\cite{GrunPann11,rawlings2017model}.
Given a model of the system, the idea is to predict the future system behavior on a finite-time horizon and,
based on the predictions, solve a respective Optimal Control Problem~(OCP). Then, the first portion
of the computed optimal control (function) is applied before
this process is repeated ad infinitum. 
Although MPC is nowadays widely used and has seen various applications, see, e.g.~\cite{QinBadg03},
there are two main obstacles: 
On the one hand, a sufficiently accurate model is required. 
On the other hand, initial and recursive feasibility have to be ensured.
The latter corresponds to 
solvability of the OCP at the successor 
time instant provided solvability of the OCP at the current time instant.
This is a non-trivial task and usually requires either some controllability properties like,
e.g., cost controllability~\cite{TunaMess06,Wort11,CoroGrun20}, in combination with a sufficiently long prediction horizon, see e.g.~\cite{boccia2014stability} and~\cite{EsteWort20} for discrete and continuous-time systems, or 
the construction of suitable terminal conditions, see e.g.~\cite{ChenAllg98,rawlings2017model}.
Especially in the presence of time-varying state or output constraints, this task becomes even more challenging as,
e.g., the extensions~\cite{AydiMuel16,KohlMuel20} to time-varying reference signals have shown. 
When considering output tracking with MPC, previous works
mostly focus on ensuring asymptotic stability of the tracking error, see, again, e.g. \cite{AydiMuel16,KohlMuel20}.
To this end, terminal sets around the reference signal and corresponding costs are introduced, resulting in so-called terminal conditions. In~\cite{KohlMuel19} tracking is achieved while avoiding such terminal constraints by assuming a suitable adaptation of cost controllability and a sufficiently long prediction horizon.

To robustly achieve output tracking,
tube-based MPC schemes construct tubes around the reference signal, 
which always contain the actual system output 
to ensure reference tracking in the presence of disturbances or uncertainties. 
For linear systems see e.g.~\cite{MaynSero05}, and for nonlinear systems see~\cite{FaluMayn14,KohlSolo21,rakovic2022homothetic} and~\cite{lopez2019dynamic}, where in the latter the tubes and the open-loop reference trajectory are simultaneously optimized  depending on its proximity to the tube boundary. 
These tubes, however, usually cannot be arbitrarily chosen 
since they have to encompass the uncertainties of the system.
To guarantee that the system output 
evolves within these tubes, terminal conditions 
are added to the optimization problem.
The tracking of a reference signal within constant bounds is studied in \cite{CairBorr16} for linear systems. 
Hereby, so-called robust control invariant sets are calculated in order to ensure
that performance,
input, and state constraints are met. The calculation of these robust control invariant sets,
however, requires a significant
computational effort and, even more important, 
the termination in finite time of the algorithm proposed in~\cite{CairBorr16} cannot be ensured.
In~\cite{YuanManz19}, the aforementioned approach was extended to systems with external disturbances.
Another approach to achieve output tracking with prescribed boundaries on the tracking error is barrier-function-based MPC, cf.~\cite{WillHeat04,Feller16, Pfitz21}.
Here, the cost function involves a term (the barrier function) which diverges, if the tracking error approaches the boundary of a given set.
However, this approach relies on imposing terminal constraints as well as terminal costs to ensure recursive feasibility.
The recently developed \textit{funnel MPC} algorithm~\cite{berger2019learningbased,BergDenn21} utilizes a particular stage cost, similar to barrier-functions, to achieve output tracking with prescribed performance.
The latter means that the tracking error evolves within (possibly time-varying) boundaries prescribed by the designer.
Given a model to be controlled, it has been rigorously proven that this novel MPC scheme is initially and recursively feasible, without imposing terminal conditions or requirements on
the prediction horizon.
However, this comes at the cost of not allowing for a-priori given state or input constraints. It is still an open question, how 
the advantages of this controller design can be maintained while incorporating such constraints.

The 
stage cost of funnel MPC is inspired by \emph{funnel control}, a high-gain adaptive feedback control law, first proposed in~\cite{IlchRyan02b}. The funnel controller is inherently robust and allows for reference tracking with prescribed performance of the tracking error for a fairly large
class of systems, see also~\cite{BergIlch21} for a comprehensive literature overview.
The idea in funnel control is that the gain is adapted based on the current distance between the error and the boundary, where the gain diverges, if the error approaches the boundary.
The boundary for the tracking error is often chosen to be a function decaying in time, which is reminiscent of a funnel.
A key feature
of funnel control is that no knowledge about the underlying system is used in the controller. 
Only structural properties of the system class such as relative degree, stable internal dynamics, and a high-gain property are assumed.
The absence of model knowledge comes at the cost that, although the input is proved to be bounded, its exact maximal value is unknown.
A relative of funnel control is \emph{prescribed performance control}~\cite{bechlioulis2008robust,bechlioulis2014low}.
Using a similar controller design, the 
tracking problem within prescribed performance boundaries is solved for a different system class. 
Since both approaches, funnel control and prescribed performance control, do not use a model of the system,
the controllers cannot ``plan ahead''.
This often results in high control values and a rapidly changing control signal with peaks.
Moreover, when implemented on real applications, both controllers require a high sampling rate to stay feasible, which may result in demanding hardware requirements.

Numerical simulations show that funnel MPC
exhibits a considerably better controller performance than pure funnel control~\cite{berger2019learningbased,BergDenn21}.
While the results in~\cite{BergDenn21} are developed for systems with relative degree one,
extensions to arbitrary relative degree based on different stage cost functions are discussed in~\cite{BergDenn22,BergDenn23b}. 
In the context of a simulation study, learning of unknown system parameters in order to apply 
funnel MPC was discussed in~\cite{berger2019learningbased}.
Beyond this, research into funnel MPC, so far, assumes the system to be precisely known and does not account
for a structural model-plant
mismatch or disturbances.
However, every model, no matter how good, deviates from the 
actual system and disturbances are omnipresent.
Furthermore, utilizing a highly detailed model is oftentimes not even desired.
Instead, one wants to use a simplified and lower-dimensional model or approximation of the system
(e.g. a discretized model for a system of partial differential equations)
to reduce complexity and computational effort, see e.g.~\cite{schilders2008model}. 
To account for external disturbances and model-plant mismatches and thereby robustify the controller, we propose \emph{robust funnel MPC}. 
This controller consists of two components: an MPC algorithm and funnel control as an additional feedback loop.
First, funnel MPC computes a control signal making use of the prediction capability of the underlying model. Then, the feedback controller (slightly) adapts the control signal using instantaneous measurement data whenever necessary to reject disturbances or to compensate a model-plant mismatch.
Therefore, the combined controller guarantees satisfaction of arbitrary output constraints.
Moreover, the proposed controller allows to update the model's state with measurement data via a \emph{proper initialization strategy}, and thereby provide data-based initial values for the optimal control problem.

\ \\
The remainder of this article is organized as follows.
In \Cref{Sec:ProblemFormulation} we define the class of systems to be controlled, and introduce the control objective.
Moreover, we discuss the components of the combined controller.
In \Cref{Sec:RobustFunnelMPC} we present the detailed controller structure, the class of models used in the model predictive controller, and the main result in \Cref{Thm:RFMPC}.
We illustrate the proposed controller with a numerical example in \Cref{sec:sim}.
\Cref{Sec:Conclusion} contains a brief conclusion of the article, and some research questions to be addressed in future work.
Most of the proofs are presented in the appendix \Cref{Sec:proofs}.

\ \\
\textbf{Nomenclature}:
In the following let $\N$ denote the natural numbers, $\N_0 = \N \cup\{0\}$, and $\R_{\ge 0}
=[0,\infty)$. By $\|x\| = \sqrt{\langle x,x\rangle}$ we denote the Euclidean norm of $x\in\R^n$. 
$\GL_n(\R)$ is the group of invertible $\R^{n\times n}$ matrices. 
 For some interval $I\subseteq\R$ and
$k\in\N$, $L^\infty(I, \R^{n})$ $\big(L^\infty_{\loc} (I, \R^{n})\big)$ is the Lebesgue space of
measurable, (locally) essentially bounded functions $f\colon I\to\R^n$ with norm $\|f \|_{\infty} =
\esssup_{t \in I} \|f(t)\|$.
$W^{k,\infty}(I,  \R^{n})$ is the Sobolev space of all functions $f:I\to\R^n$ with $k$-th order weak derivative $f^{(k)}$ and $f,f^{(1)},\ldots,f^{(k)}\in
L^\infty(I, \R^{n})$. For some $V\subseteq\R^m$ we denote by $\con^k(V,  \R^{n})$ the set of  $k$-times continuously differentiable
functions  $f:  V  \to \R^{n}$, and for brevity $\con(V,  \R^{n}) := \con^0(V,  \R^{n})$. Furthermore,  $\cR(I,\R^n)$ is the space of all 
regulated functions $f:I\to\R^n$, i.e., the left and right limits $f(t-)$ and $f(t+)$ exist for all interior points $t\in I$ and $f(a-)$ and $f(b+)$ exist whenever $a= \inf I \in I$ or $b=\sup I \in I$.
For an interval~$I$ and a function~$f:I \to \R^n$, the restriction of~$f$ to~$I$ is denoted by $f|_I$ .

\section{Problem formulation} \label{Sec:ProblemFormulation}
Before we establish the problem formulation and the control objective, we emphasize the following terminology used throughout the entire article.
The term \emph{system} refers to the actual plant to be controlled, i.e., the real system for which we do not assume availability of equations governing the dynamics.
The term \emph{model} refers to differential equations given by the control engineer. 
These model equations are used in the MPC algorithm to compute predictions.

\subsection{System class}
We consider nonlinear multi-input multi-output control systems 
\begin{equation}\label{eq:Sys}
    \begin{aligned}
        \dot{y}(t)  & = F(d(t),\oT(y)(t),u(t)),\quad y\vert_{[-\sigma,0]}=y^0\in\con([-\sigma,0],\R^m), 
    \end{aligned}
\end{equation}
with input~$u\in L^\infty_{\loc}(\Rp, \R^m)$ and output $y(t)\in\R^m$ at time $t\geq 0$.
Note that $u$ and $y$ have the same dimension~$m\in\N$.
The system consists of the \emph{unknown} nonlinear function $F\in\con(\R^p\times \R^q \times \R^m,\R^m)$,
\textit{unknown} nonlinear operator $\oT:\con([-\sigma,\infty),\R^m)\to L^\infty_{\loc}(\Rp,\R^q)$, 
and may incorporate bounded disturbances $d\in L^\infty(\Rp,\R^p)$.
Then, the constant~$\sigma\geq0$ quantifies the initial ``memory'' of the system and $y^0 \in \cC([-\sigma,0],\R^m)$ is the initial history.
The system class under consideration is characterised in detail in the following definition.
\begin{definition}[System class $\cN^{m}$] \label{Def:system-class}
     A system~\eqref{eq:Sys} belongs to the system class $\cN^{m}$, written $(d, F, \oT)
     \in\cN^{m}$, if, for some $p,q\in\N$ and $\sigma \geq0$, the following holds:
    \begin{enumerate}[label = (\roman{enumi})]
         \item  $d\in L^\infty(\Rp,\R^p)$,
        \item $\textbf{T}:\con([-\sigma,\infty),\R^m)\to L^\infty_{\loc} (\Rp, \R^{q})$ has the following properties:
        \begin{itemize}\label{Item:Operator-class}
            \item\textit{Causality}:  $\fa y_1,y_2\in\con([-\sigma,\infty),\R^m) \fa t\geq 0$:
            \[
                y_1\vert_{[-\sigma,t]} = y_2\vert_{[-\sigma,t]}
                \quad \Impl\quad
                \textbf{T}(y_1)\vert_{[0,t]}=\textbf{T}(y_2)\vert_{[0,t]}.
            \]
            \item\textit{Local Lipschitz}: 
            $\fa t \ge 0  \fa y \in \con([-\sigma,t] ; \R^m)  \ex \Delta, \delta, c > 0$ 
            $\fa y_1, y_2 \in \con([-\sigma,\infty) ; \R^m)$ with
            ${y_1|_{[-\sigma,t]} = y}$, $y_2|_{[-\sigma,t]} = y $ 
            and $\Norm{y_1(s) - y(t)} < \delta$,  $\Norm{y_2(s) - y(t)} < \delta $ for all $s \in [t,t+\Delta]$:
            \[
                \esssup_{s \in [t,t+\Delta]}  \Norm{\textbf{T}(y_1)(s) - \textbf{T}(y_2)(s) }  
                \le c \ \sup_{s \in [t,t+\Delta]} \Norm{y_1(s) - y_2(s)} .
            \]
            \item\textit{Bounded-input, bounded-output (BIBO)}:
            $\fa c_0 > 0 \ex c_1>0 \fa y \in \con([-\sigma,\infty), \R^m)$:
            \[
                \sup_{t \in [-\sigma,\infty)} \Norm{y(t)} \le c_0 \ 
                \Impl \ \sup_{t \in [0,\infty)} \Norm{\textbf{T}(y)(t)}  \le c_1.
            \]
        \end{itemize}
            \item \label{Item:high-gain-prop}
             $F\in\con(\R^p\times \R^q \times \R^m,\R^m)$ has the \emph{{perturbation} high-gain property}, i.e.
            for every compact set $K_m \subset \R^m$ there exists~$\nu\in(0,1)$ such that
            for every compact sets $K_p\subset \R^p$, $K_q\subset\R^q$
            the function
            \[
                \chi\colon\R\to\R, \
                s \mapsto \min
                \setdef{\!\!\langle v, F(\delta,z, \Delta -s v)\rangle\!\!}
                {\!\!
                    \delta\in K_p, \Delta \in K_m, z \in  K_q, v\in\R^m,~\nu \leq \|v\| \leq 1
                \!\!}
            \]
            satisfies $\sup_{s\in\R} \chi(s)=\infty$.
    \end{enumerate}
\end{definition}
The properties of the system class~$\cN^{m}$ as in \Cref{Def:system-class} guarantee that, for a control input~$u\in L^\infty_{\loc}(\Rp, \R^m)$, the system~\eqref{eq:Sys} has a solution in the sense of \textit{Carath\'{e}odory},
meaning a function~$y:[-\sigma,\omega)\to\R^m$, $\omega>0$, with $y\vert_{[-\sigma,0]}=y^0$ such that $y\vert_{[0,\omega)}$ is absolutely continuous and satisfies the functional differential equation~\eqref{eq:Sys} for almost all~$t\in[0,\omega)$.
A solution~$y$ is said to be \textit{maximal}, if it has no right extension that is also a solution.
We briefly discuss an example of a system, belonging to the class introduced above. In particular, this provides a simple candidate for
an operator~$\oT$ and illustrates the perturbation high-gain property.
\begin{example} \label{Ex:LTIsystem}
We consider a linear multi-input, multi-output system with bounded matched disturbances~$\delta$ (i.e. the disturbances act on the input channels only) of the form
\begin{align*}
    \dot x(t) &= A x(t) + B( u(t) + \delta(t)),  \quad x(0) = x^0\\
    y(t) &= C x(t),
\end{align*}
with $A \in \R^{n \times n}$ and $C, B^\top \in \R^{m \times n}$, such that $C B$ is positive definite and the zero dynamics are asymptotically stable (i.e., the system is minimum phase), that is~\cite{IlchWirt13,Isid95}
\begin{equation}\label{eq:minphaselin}
  \forall\, \lambda \in \C \text{ with } {\rm Re}\, \lambda\ge 0: \  \det \begin{bmatrix}
         \lambda I - A & B \\
        C & 0
    \end{bmatrix} \neq 0.
\end{equation}
By~\cite[Lem.~3.5]{IlchRyan07}, there exists an invertible $U \in \R^{n \times n}$ such that with $(y^\top,\eta^\top)^\top = U x$ the above system can be transformed into
\begin{align*}
    \dot y(t) & = R y(t) + S \eta(t) + \Gamma (u(t)  + \delta(t)), \\
    \dot \eta(t) &= Q \eta(t) + P y(t),
\end{align*}
where $R \in \R^{m \times m}$, $S, P^\top\in\R^{m\times (n-m)}$, $Q\in\R^{(n-m)\times (n-m)}$, $\Gamma = CB$. Furthermore, the minimum phase property~\eqref{eq:minphaselin} implies that ${\rm Re}\, \lambda < 0$ for all eigenvalues $\lambda\in\C$ of~$Q$ (the matrix~$Q$ is called \textit{Hurwitz} in this case).
Defining the linear integral operator
\begin{equation*}
    L: y(\cdot) \mapsto \left( t \mapsto \int_0^t e^{Q(t-s)} P y(s) \text{d}s \right),
\end{equation*}
and setting $d(t) := S e^{Qt} [0,I_{n-m}] U x^0$, $t\ge 0$, the system can be further rewritten as
\begin{equation*}
    \dot y(t) = d(t) + \oT(y)(t) + \Gamma (u(t)+\delta(t)) =: F(d(t),\oT(y)(t),u(t) + \delta(t)),
\end{equation*}
where $\oT : y(\cdot) \mapsto ( t \mapsto R y(t) + SL(y)(t))$.
A short calculation verifies that this operator satisfies the conditions in~\ref{Item:Operator-class} of \Cref{Def:system-class}.
In particular, the BIBO property is satisfied since~$Q$ is Hurwitz, and it is hence a consequence of the minimum phase property. Moreover, the function~$F$ has the perturbation high-gain property~\ref{Item:high-gain-prop} if, and only if, the matrix~$\Gamma $ is sign-definite, that is $v^\top \Gamma v\neq 0$ for all $v\in\R^m\setminus\{0\}$, cf.~\cite[Rem.~1.3, and Sec.~2.1.3]{BergIlch21}. 
This means that every bounded matched input disturbance~$\delta$ can be compensated.
\end{example}
 \Cref{Ex:LTIsystem} shows that minimum phase linear systems with relative degree one and sign-definite $CB$ are contained in the system class introduced in \Cref{Def:system-class}.
    Furthermore, the system class  encompasses 
    nonlinear control affine systems 
    $\dot y(t) = f(d(t),\oT(y)(t)) + g(\oT(y)(t)) u(t)$, where $f\in \con(\R^p\times\R^q,\R^m)$,~$d$ and~$\oT$ exhibit the properties as in \Cref{Def:system-class} and~$g$ is sign-definite in the sense that $v^\top g(x) v \neq 0$ for all~$x\in \R^q$ and all $v \in \R^m\setminus\{0\}$.
    We emphasize that physical effects such as \emph{backlash} or \emph{relay hysteresis}, and \emph{nonlinear time delays} can be modelled by the operator $\oT$, cf.~\cite{IlchRyan02b,BergIlch21,BergPuch20}. 
    Moreover, the operator $\oT$ in~\eqref{eq:Sys} can even be the solution operator of an infinite dimensional dynamical system, e.g. a partial differential equation.
    The latter was studied in~\cite{BergPuch22}, where a moving water tank was subject to funnel control, and the water in the tank was modelled by the linearized Saint-Venant equations.\\    
    While the first property of the operator (causality) introduced in
    \cref{Def:system-class}~\ref{Item:Operator-class} is quite intuitive, the second (locally Lipschitz) is of a more technical nature. 
    It is required to derive existence and uniqueness of solutions.
    This condition generalizes the assumption of local Lipschitz continuity of vector fields in standard state-space systems.
    The third property (BIBO) can be motivated from a practical point of view. Essentially, it is a stability condition on the ``internal dynamics'' of the system.
    The latter are represented by~$\eta$ in \Cref{Ex:LTIsystem} and, since~$Q$ is Hurwitz, $\eta$ is bounded for any bounded~$y$; hence, the $\eta$-dynamics (internal dynamics) exhibit a bounded-input, bounded-state property. 
    As shown in \Cref{Ex:LTIsystem}, this property is closely related to the BIBO property of the operator~$\oT$.
    The perturbation high-gain property introduced in \Cref{Def:system-class}~\ref{Item:high-gain-prop} is
    a modification of the so-called \emph{high-gain} property, see e.g. \cite[Def.~1.2]{BergIlch21}, and, 
    at first glance, a stronger assumption. 
    The high-gain property is essential in high-gain adaptive control and, roughly speaking, guarantees that, if a large enough input is applied, the system reacts sufficiently fast.
    For linear systems, as in \Cref{Ex:LTIsystem}, having the high-gain property implies that the system can be stabilized via high-gain output feedback, cf.~\cite[Rem.~1.3]{BergIlch21}.
    In order to account for possible bounded perturbations of the input, we require the modified property~\ref{Item:high-gain-prop}.
    It is an open problem whether the perturbation high-gain property and the high-gain property are equivalent. 
    However, in virtue of \Cref{Ex:LTIsystem} it is clear that control affine systems (linear and nonlinear) with high-gain property satisfy both properties.

\subsection{Control objective}
As a surrogate for the unknown system~\eqref{eq:Sys},  we consider a control-affine model of the form
\begin{equation}\label{eq:Mod}
    \begin{aligned}
        \dot{x}(t)  & = f(x(t)) + g(x(t)) u(t),\quad x(t_0)=x^0,\\
        y_{\rm M}(t)        & = h(x(t)),
    \end{aligned}
\end{equation}
at time $t_0\in\Rp$, with $x^0\in\R^n$, and \emph{known} functions $f\in\con^1(\R^n,\R^n)$,
$g\in\con^1(\R^n, \R^{n\times m})$, and $h\in\con^2(\R^n,\R^m)$.
Note that, in many situations, systems of the form~\eqref{eq:Mod} 
can be written in the form~\eqref{eq:Sys}.
Since the right-hand side of~\eqref{eq:Mod} is locally Lipschitz in~$x$, there exists a unique maximal solution of~\eqref{eq:Mod} for any $u\in L^\infty_{\loc}(\Rp, \R^m)$, cf.\cite[\S~10 Thm.~XX]{Walt98}.
This maximal solution is denoted by~$x(\cdot;t_0,x^0,u)$.
Contrary to the actual system~\eqref{eq:Sys}, the model~\eqref{eq:Mod} lays out its states~$x$
in an explicit way.
The model is used to make predictions about the future system output and, based on them, to compute optimal control signals.
The discrepancies between the model predictions~$y_{\rm M}(t)$ and the actual system 
output~$y(t)$ is described by the model-plant mismatch
\begin{equation*}
        e_{\rm S}(t)  := y(t)-y_{\rm M}(t).
\end{equation*}
The objective is to design a combination of a model predictive control scheme with a feedback controller (based on the measurement $y(t)$)
which, if applied to system~\eqref{eq:Sys}, allows  for reference tracking of a given 
trajectory $y_{\rf}\in W^{1,\infty}(\Rp,\R^{m})$ within predefined boundaries. To be precise, the
tracking error $t\mapsto e(t):=y(t)-y_{\rf}(t)$ shall evolve within the prescribed performance funnel
\begin{align*}
    \cF_\psi:= \setdef{(t,e)\in \Rp\times\R^{m}}{\Norm{e} < \psi(t)},
\end{align*}
see \Cref{Fig:funnel}.
\begin{figure}[H]
  \begin{center}
    \begin{tikzpicture}[scale=0.35]
    \tikzset{>=latex}
    \filldraw[color=gray!25] plot[smooth] coordinates {(0.15,4.7)(0.7,2.9)(4,0.4)(6,1.5)(9.5,0.4)(10,0.333)(10.01,0.331)(10.041,0.3) (10.041,-0.3)(10.01,-0.331)(10,-0.333)(9.5,-0.4)(6,-1.5)(4,-0.4)(0.7,-2.9)(0.15,-4.7)};
    \draw[thick] plot[smooth] coordinates {(0.15,4.7)(0.7,2.9)(4,0.4)(6,1.5)(9.5,0.4)(10,0.333)(10.01,0.331)(10.041,0.3)};
    \draw[thick] plot[smooth] coordinates {(10.041,-0.3)(10.01,-0.331)(10,-0.333)(9.5,-0.4)(6,-1.5)(4,-0.4)(0.7,-2.9)(0.15,-4.7)};
    \draw[thick,fill=lightgray] (0,0) ellipse (0.4 and 5);
    \draw[thick] (0,0) ellipse (0.1 and 0.333);
    \draw[thick,fill=gray!25] (10.041,0) ellipse (0.1 and 0.333);
    \draw[thick] plot[smooth] coordinates {(0,2)(2,1.1)(4,-0.1)(6,-0.7)(9,0.25)(10,0.15)};
    \draw[thick,->] (-2,0)--(12,0) node[right,above]{\normalsize$t$};
    \draw[thick,dashed](0,0.333)--(10,0.333);
    \draw[thick,dashed](0,-0.333)--(10,-0.333);
    \node [black] at (0,2) {\textbullet};
    \draw[->,thick](4,-3)node[right]{\normalsize$\inf\limits_{t \ge 0} \psi(t)$}--(2.5,-0.4);
    \draw[->,thick](3,3)node[right]{\normalsize$(0,e(0))$}--(0.07,2.07);
    \draw[->,thick](9,3)node[right]{\normalsize$\psi(t)$}--(7,1.4);
    \end{tikzpicture}
  \end{center}
  \vspace*{-2mm}
  \caption{Error evolution in a funnel $\mathcal F_{\psi}$ with boundary~$\psi$.} 
  \label{Fig:funnel}
\end{figure}
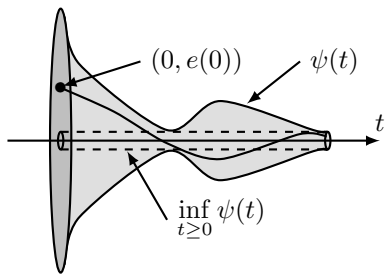
The funnel~$\mathcal{F}_{\psi}$ is determined by the choice of $\psi$
belonging to the following set of bounded functions with bounded weak derivative
\[
    \cG =\setdef{\psi\in W^{1,\infty}(\Rp,\R)}{\inf_{t \ge 0} \psi(t)>0}.
\]
The specific application usually dictates the constraints on the tracking error and thus indicates suitable choices for $\psi$. 
Note that signals evolving in~$\cF_\psi$ are not forced to asymptotically converge to~$0$.
To achieve that the tracking error~$e$ remains
within~$\cF_\psi$, it is necessary that the output~$y(t)$ of the
system~\eqref{eq:Sys} at time $t\geq 0$ is an element of 
the set
\begin{align*}
    \cD_t := \setdef
                    {y\in\R^m}
                    {\Norm{y - y_{\rf}(t)} <\psi(t)}.
\end{align*}

\subsection{Funnel MPC and funnel control}
To solve the problem of tracking a reference signal~$y_{\rf}\in W^{1,\infty}(\Rp,\R^{m})$
within pre-defined funnel boundaries~$\psi\in\cG$ for the model~\eqref{eq:Mod}
with MPC, \textit{funnel MPC} was proposed in~\cite{BergDenn21}.
Assuming  system and model to be identical, perfectly known, and of form~\eqref{eq:Mod},
the \textit{stage cost} 
$\ell:\Rp\times\R^n\times\R^{m}\to\R\cup\{\infty\}$ defined by
\begin{align}\label{eq:stageCostFunnelMPC}
    \ell(t,x,u) =
    \begin{dcases}
        \frac {\Norm{h(x)-y_{\rf}(t)}^2}{\psi(t)^2 - \Norm{h(x)-y_{\rf}(t)}^2} 
        + \l_u \Norm{u}^2,
            & \Norm{h(x)-y_{\rf}(t)} < \psi(t)\\
        \infty,&\text{else},
    \end{dcases}
\end{align}
with design parameter~$\lambda_u\in\Rp$ was proposed.
To further ensure a bounded control signal with a maximal pre-defined control value~$M>0$, 
the constraint~$\SNorm{u}\leq M$ has been added as an additional constraint to the OCP.
Using this {stage cost}, given a sufficiently large~$M>0$, 
and assuming certain structural properties which we will introduce in detail in \Cref{Sec:ModelClass},
it was shown that the following funnel MPC \Cref{Algo:FMPC} 
is initially and recursively feasible and applying this control scheme to a model of the
form~\eqref{eq:Mod} guarantees $\Norm{y_{ \rm M}(t)-y_{\rf}(t)}<\psi(t)$ for all
$t\in[0,\infty)$, provided that $\| y_{\rm M}(0) - y_{\rf}(0) \| < \psi(0)$ holds, see~\cite[Thm.~2.10]{BergDenn21}.
\begin{algo}[Funnel MPC]\label{Algo:FMPC}\ \\
    \textbf{Given:} Model~\eqref{eq:Mod}, reference signal $y_{\rf}\in
    W^{1,\infty}(\Rp,\R^{m})$, funnel function $\psi\in\cG$, control bound $M>0$,
    initial state $x^0$ such that $h(x^0) \in\cD_{0}$, and stage cost function~$\ell$ as in~\eqref{eq:stageCostFunnelMPC}.\\
    \textbf{Set} time shift $\delta >0$, prediction horizon $T\geq\delta$, define the time sequence~$(t_k)_{k\in\N_0} $ by $t_k := k\delta$
    and set the current index~$k=0$.\\
    \textbf{Steps:}
    \begin{enumerate}[label=(\alph*), ref=\alph*, leftmargin=*]
    \item\label{agostep:FMPCFirst} Obtain a measurement of the state $x$ of~\eqref{eq:Mod}
    at time~$t_k$ and set $\hat x :=x(t_k)$.
    \item Compute a solution $u^{\star}\in L^\infty([t_k,t_k +T],\R^{m})$ of
    \begin{equation*}
            \mathop
            {\operatorname{minimize}}_{\substack{u\in L^{\infty}([t_k,t_k+T],\R^{m}),\\\SNorm{u}\leq M}}  \quad
            \int_{t_k}^{t_k+T}\ell(t,x(t;t_k,\hat{x},u),u(t))\d t .
    \end{equation*}
    \item Apply the time-varying control
        \[
            \mu:[t_k,t_{k+1})\times\R^n\to\R^m, \quad \mu(t,\hat x) =u^{\star}(t)
        \]
        to the model~\eqref{eq:Mod}.
        Increment~$k$ by~$1$ and go to Step~\eqref{agostep:FMPCFirst}.
    \end{enumerate}
\end{algo}

\begin{remark}
    The cost function~\eqref{eq:stageCostFunnelMPC} used in the funnel MPC~\Cref{Algo:FMPC} is inspired by the~\emph{funnel controller}. 
    For systems~\eqref{eq:Sys} with $(d,F,\oT) \in \cN^m$ as in~\Cref{Def:system-class} 
    and given reference trajectory $y_{\rm ref} \in W^{1,\infty}(\Rp,\R^m)$ it was shown in \cite[Thm.~1.9]{BergIlch21} 
    that the tracking error $e(t):=y(t)-y_{\rf}(t)$ always evolves within the performance funnel~$\cF_\psi$ by applying the control signal
    \begin{equation} \label{eq:u_fc}
        u(t) = (N \circ \alpha)(\| e(t)\slash \psi(t)\|^2) e(t)\slash \psi(t),
    \end{equation}
    where $N \in \con(\Rp,\R)$ is a surjection and $\alpha \in\con([0,1) ,[1,\infty))$ is a bijection. A simple (and often used) feasible choice is $\alpha(s) = 1/(1-s)$ and $N(s) = s \sin(s)$.    
\end{remark}

\section{Robust funnel MPC} \label{Sec:RobustFunnelMPC}

We present in detail the idea of how to combine funnel MPC~\Cref{Algo:FMPC},
see also~\cite{berger2019learningbased, BergDenn21},
with results on the \emph{model-free} funnel controller 
to achieve the control objective in the presence 
of a mismatch between the system~\eqref{eq:Sys} and the model~\eqref{eq:Mod}.
The idea is depicted in \Cref{fig:ControlStructure}.
\begin{figure}[ht]
    \centering
     \scalebox{.94}{
    \begin{tikzpicture}[very thick,%
        scale=0.58,%
        node distance = 9ex,
        box/.style={fill=white,rectangle, draw=black},
        blackdot/.style={inner sep = 0, minimum size=3pt,shape=circle,fill,draw=black},%
        blackdotsmall/.style={inner sep = 0, minimum size=0.1pt,shape=circle,fill,draw=black},%
        plus/.style={fill=white,circle,inner sep = 0,very thick,draw},%
        metabox/.style={inner sep = 3ex,rectangle,draw,dotted,fill=gray!20!white}]
        \begin{scope}[scale=0.5]
            \node (sys) [box,minimum size=9ex,xshift=-1ex]  {System};
            \node(FC) [box, below of = sys,yshift=-8ex,minimum size=9ex] {Funnel controller};
            \node(fork1) [plus, right of = FC, xshift=18ex] {$+$};
            \node(fork9) [blackdot, inner sep = 0pt, right of = fork1, xshift=-2ex ] {};
            \node(fork2) [plus, left of = FC, xshift=-15ex] {$+$};
            \node(fork3) [blackdot, left of = fork2, xshift=-0ex] {};
            \node(MPC) [box, left of = fork3,xshift=-8ex,minimum size=9ex] {FMPC};
            \node(MPCin1) [minimum size=0pt, inner sep = 0pt, below of = MPC, yshift=4.5ex, xshift=2ex] {};
            \node(MPCin1Desc) [minimum size=0pt, inner sep = 0pt, below of = MPCin1, yshift=5ex, xshift=2.5ex] {$y$};
            \node(MPCin2) [minimum size=0pt, inner sep = 0pt, below of = MPC, yshift=4.5ex, xshift=-2ex]{};
            \node(MPCin2Desc) [minimum size=0pt, inner sep = 0pt, below of = MPCin2, yshift=5ex, xshift=-2.5ex] {$y_{\rf}$};
            \node(refin) [minimum size=0pt, inner sep = 0pt, below of = MPC, yshift=-2ex, xshift=-2ex] {};
            \node(Mod) [box, above of = MPC,yshift=8ex,minimum size=9ex] {Model};
            \node(fork4) [blackdot, left of = MPC, xshift=-5ex] {};
            \node(fork5) [minimum size=0pt, inner sep = 0pt, below of = fork4, yshift=-5ex] {};
            \node(fork6) [minimum size=0pt, inner sep = 0pt, below of = fork9, yshift=-7ex] {};
            \node(fork7) [minimum size=0pt, inner sep = 0pt, below of = MPCin1, yshift=-2.5ex] {};
            \draw[->] (refin) -- (MPCin2) node[pos=0.4,left] {};
            \draw[->] (MPC) -- (fork2) node[pos=0.2,above] {$u_{\rm FMPC}$};
            \draw[->] (fork3) |- (Mod);
            \draw (Mod) -| (fork4) node[pos=0.3,above] {$y_{\rm M}$};
            \draw[->] (fork4) -- (MPC);
            \draw[-] (sys) -| (fork9)node[pos=0.05,above] {$y$} ;
            \draw[->] (fork9) -- (fork1) node[pos=0.6,right, above] {$+$};
            \draw[-] (fork9) -- (fork6.south);
            \draw[-] (fork6.east) -- (fork7.west);

            \path[->,name path=line1] (fork7.south) -- (MPCin1){};
            \draw[->,name path=line2] (fork5.west) -| (fork1) node[pos=0.9,left] {$-$};
            \path [name intersections={of = line1 and line2}];
            \coordinate (S)  at (intersection-1);          
            \path[name path=circle] (S) circle(5.mm);
            \path [name intersections={of = circle and line1}];
            \coordinate (I1)  at (intersection-1);
            \coordinate (I2)  at (intersection-2);
            \tkzDrawArc[color=black, very thick](S,I1)(I2);
            \draw[-] (fork7.south) |- (I2);
            \draw[->] (I1) -- (MPCin1);

            \draw[->] (fork1) -- (FC) node[midway,above] {$e_{\rm S}=y - y_{\rm M}$};
            \draw[->] (FC) -- (fork2) node[pos=0.4,above] {$u_{\rm FC}$};
            \draw[->] (fork2) |- (sys) node[pos=0.73,above] {$u=u_{\rm FMPC} + u_{\rm FC}$};
            \draw (fork4) -- (fork5.south);
        \end{scope}
        \begin{pgfonlayer}{background}
            \fill[red!20] (-18,-7.5) rectangle (-8,2.);
            \fill[blue!20] (-5.,-7.5) rectangle (8,-2.);
            \node at (-12.9,-9.5) {\color{red}{\large Model-based controller component}};
            \node at (1.5,-9.5) {\color{blue}{\large Model-free controller component}};
        \end{pgfonlayer}
    \end{tikzpicture}
    } 
    \caption{Structure of the robust funnel MPC scheme}
    \label{fig:ControlStructure}
\end{figure}
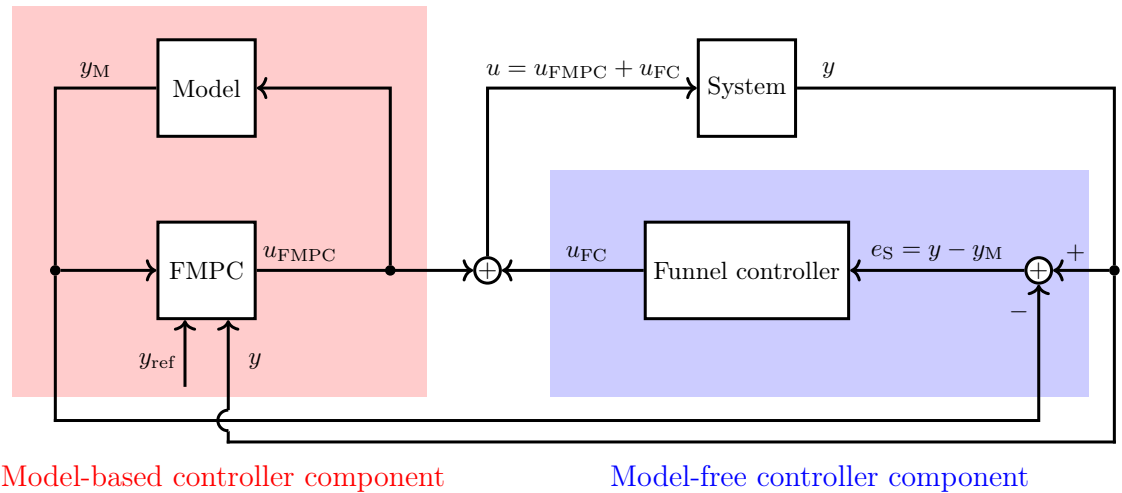
The left block with red background contains the model~\eqref{eq:Mod}, the funnel MPC \Cref{Algo:FMPC} and a given reference trajectory $y_{\rm ref}$. 
We emphasize that the model is given by the designer and hence it is known.
Funnel MPC achieves, for given $\psi \in \cG$, that the model's output $y_{\rm M}$ tracks the reference with predefined accuracy,
i.e., $\|e_{\rm M}(t)\| = \|y_{\rm M}(t) - y_{\rm ref}(t) \| < \psi(t)$ for all $t \ge 0$,
while the control input $u_{\rm FMPC}$ minimizes the stage cost~\eqref{eq:stageCostFunnelMPC}, cf.~\cite{BergDenn21}.

The right block contains the system to be controlled, and a funnel control feedback loop. 
Given a reference signal $\rho \in W^{1,\infty}(\Rp,\R^m)$ and a funnel function $\vp \in \cG$, funnel control~\eqref{eq:u_fc} achieves that the system's output $y$ tracks the reference with predefined accuracy,
i.e., $\| y(t) - \rho(t) \| < \vp(t)$ for all $t \ge 0$, cf. \cite{IlchRyan02b,BergLe18a,BergIlch21}.
Note that this control scheme is model free.
In other words, the control~\eqref{eq:u_fc} can be successfully applied to any system of the form~\eqref{eq:Sys} satisfying some structural properties without knowledge of the system's parameters.
Merely the  initial error is required to be within the funnel boundaries.  

The advantage of funnel MPC is that the control input $u_{\rm FMPC}$ is optimal in the sense of minimizing a given cost function. The advantage of funnel control is that it does not require the knowledge of a model and is hence inherently robust. We admit that by combining both control strategies we loose both:
the combination is neither model-free, nor is the control signal optimal.
The main idea is to \emph{robustify} the funnel MPC scheme w.r.t. uncertainties and disturbances, which is indeed guaranteed by the funnel controller component.
However, the funnel controller should remain inactive as long as the prediction~$y_{\rm M}$ by the model of the system output~$y$ is {sufficiently} 
accurate. 
It should only be active when the system is, according to the model, in a critical state, meaning that the predicted error $e_{\rm M}$ is close to the funnel boundary $\psi$.
In this case the funnel controller achieves that system and model behave similarly.
Therefore, $\vp = \psi - \|e_{\rm M}\|$ is chosen as funnel function for the funnel controller.
This approach ensures that the funnel controller only compensates disturbances when necessary and hence the combined control signal $u = u_{\rm FMPC} + u_{\rm FC}$ deviates from the optimal control $u_{\rm FMPC}$ as slightly as possible.

Let us give a more precise description of the controller structure depicted in \Cref{fig:ControlStructure}:
On the left hand side (red box), funnel MPC 
computes the control signal $u_{\rm FMPC}(t)$, $t \in [t_k,t_k+\delta)$, and the corresponding output is $y_{\rm M}(t)$, $t \in [t_k,t_k+\delta)$, which is handed over to the funnel controller 
on the right side of \Cref{fig:ControlStructure} (blue box) and serves as a reference signal for system~\eqref{eq:Sys}.
Via the application of the funnel control $u_{\rm FC}$ the system's output $y$ follows the model's output $y_{\rm M}$ with pre-defined accuracy, i.e., 
$\|e_{\rm S}(t)\| = \| y(t) - y_{\rm M}(t) \| < \vp(t)$, where $\vp = \psi - \|e_{\rm M}\|$ as mentioned above.
The control signal $u = u_{\rm FMPC} + u_{\rm FC}$ is applied to the system, which has the following consequence:
If the model and the system coincide and are initialized equally, then the application of the control $u_{\rm FMPC}$ has the same effect on both dynamics, and so the system's output $y$ equals the model's output $y_{\rm M}$, i.e., $\| y(t) - y_{\rm ref}(t) \| = \| y_{\rm M}(t) - y_{\rm ref}(t) \| < \psi(t)$.
Invoking~\eqref{eq:u_fc}, this in particular means $u_{\rm FC} = 0$.
If, however, the model does not match the system, then $e_S(t) \neq 0$ and $u_{\rm FC}(t) \neq 0$.
Roughly speaking, the more model and system differ, the more the funnel controller has to compensate; and the better the model matches the system, the more the control $u_{\rm FMPC}$ can contribute to the tracking task.

Since $u_{\rm FMPC}$ is a (piecewise) optimal control calculated using the model~\eqref{eq:Mod},
the aim is to keep $u_{\rm FC}$ as small as possible while achieving the tracking objective.
When the OCP is solved in funnel MPC at time instance~$t_{k}$,
it is necessary to update the initial value~$x(t_k)$ of the model~\eqref{eq:Mod},
if there is a mismatch between $y(t_{k})$ and $y_{\rm M}(t_{k})$.
One has to find a  \emph{proper initialization} for the model, meaning, based on the information of $y(t_{k})$, 
it is necessary to find a starting configuration of the model such that its output $y_{\rm M}(t_{k})$ is close to the system's output $y(t_{k})$
in order to calculate a control signal~$u_{\rm FMPC}$ for the next time interval~$[t_{k},t_{k+1}]$, 
which contributes to the tracking task.
Otherwise, due to deviation between system and model, it might be possible, that the control signal~$u_{\rm FMPC}$
is unsuitable and needs to be compensated by~$u_{\rm FC}$.

The remainder of this section is organized as follows. In \Cref{Sec:ModelClass} we introduce the class of models to be used in the robust funnel MPC \Cref{Algo:RobustFMPC}. 
Then in \Cref{Sec:ControllerStructure} we discuss in detail the controller's structure. To prove recursive feasibility of the proposed MPC algorithm, we introduce a \emph{proper initialization strategy} in \Cref{Def:Initialization}.
In order to avoid that the funnel feedback controller unnecessarily compensates small model-plant mismatches, we introduce an \emph{activation function}.
With the introductory work at hand, we finally establish the main result in \Cref{Sec:MainResult}.

\subsection{Model class} \label{Sec:ModelClass}
We stress that the model~\eqref{eq:Mod} itself is, in its essence,
a controller design parameter~-- the better the model, the better the controller performance.
But we will be able to show that even with a very poor model the robust funnel MPC \Cref{Algo:RobustFMPC} achieves the control objective.
Since in the later analysis we utilize the so-called \emph{Byrnes-Isidori form},
we make the following assumptions about the model~\eqref{eq:Mod} throughout this work.

\begin{assn}\label{ass:BIF}
    The model~\eqref{eq:Mod} has global relative degree $r=1$, i.e., the \emph{high-gain matrix} $\Gamma(x) := ( h' g)(x)$ is invertible for all $x\in\R^n$, where $h'$ denotes the Jacobian of~$h$. 
    Additionally, $h^{-1}(0)$ is diffeomorphic to $\R^{n-m}$ and the mapping $x\mapsto G(x) := \im g(x)$ is involutive, i.e., 
    for all smooth vector fields $V_i: \R^n\to\R^n$ with $V_i(x)\in G(x)$, $i \in \{1,2\}$, we have that the Lie bracket $[V_1, V_2](x) = V_1'(x) V_2(x) - V_2'(x) V_1(x)$ satisfies $[V_1, V_2](x)\in  G(x)$ for all $x\in\R^n$. 
\end{assn}
If the model fulfills \Cref{ass:BIF}, then, by~\cite[Cor.~5.7]{ByrnIsid91a} there exists a diffeomorphism $\Phi:\R^n\to\R^n$ such that the coordinate transformation $(y_{\rm M}(t),\eta(t)) = \Phi(x(t))$ puts the model~\eqref{eq:Mod} into Byrnes-Isidori form
\begin{subequations}\label{eq:BIF}
    \begin{align}
        \dot y_{\rm M}(t) &= p\rbl y_{\rm M}(t),\eta(t)\rbr + \G\rbl \Phi^{-1}\rbl y_{\rm M}(t),\eta(t)\rbr\rbr\,u(t),\quad\ (y_{\rm M}(0),\eta(0)) = (y_{\rm M}^0,\eta^0) = \Phi(x^0), \label{eq:output_dyn}\\
        \dot \eta(t) &= q\rbl y_{\rm M}(t),\eta(t)\rbr,\label{eq:zero_dyn}
    \end{align}
\end{subequations}
where $p\in \con^1(\R^m\times\R^{n-m},\R^m)$ and $q\in \con^1(\R^m\times\R^{n-m},\R^{n-m})$. Here, equation~\eqref{eq:zero_dyn} describes the so-called \emph{internal dynamics}.

\begin{remark}
For specific applications, if possible, the model should be chosen such that it is already in Byrnes-Isidori form~\eqref{eq:BIF}. The reason is that finding the diffeomorphism $\Phi : \R^n \to \R^n$ is a hard task in general, cf. \cite{Isid95}. In \cite{Lanz21} an approach is presented to compute $\Phi$ algorithmically.
    In the simple but relevant case of linear output $h(x) = H x$, $H \in \R^{m \times n}$, and constant input distribution $g(x) = G \in \R^{n \times m}$ such that $HG$ is invertible, the assumptions of~\cite[Cor.~5.7]{ByrnIsid91a} are satisfied and following the derivations in~\cite{Lanz21}, the transformation can be written as
    \begin{equation}\label{eq:Phi-lin}
        \begin{pmatrix} y \\ \eta \end{pmatrix} = \Phi (x) 
        = \begin{bmatrix}
        H \\
            V^\dagger( I_n - G (HG)^{-1} H)
        \end{bmatrix}  x , \quad V \in \R^{n \times (n-m)} \ \text{with} \ \im V = \ker H,
    \end{equation}
    where $V^\dagger \in \R^{(n-m) \times n}$ denotes the pseudoinverse of $V$.
    In this particular case the inverse transformation is given by $x = \Phi^{-1}(y,\eta) = G (HG)^{-1} y + V \eta$. 
    Then equations~\eqref{eq:BIF} read
    \begin{align*}
        \dot y_{\rm M}(t) &= Hf\big( G (HG)^{-1} y_{\rm M}(t) + V \eta(t)\big) + HG u(t), \\
        \dot \eta(t) &= V^\dagger( I_n - G (HG)^{-1} H)  f(G (HG)^{-1} y_{\rm M}(t) + V \eta(t) ).
    \end{align*}
\end{remark}

With the assumption made above, we may now introduce the class of models to be considered. 
\begin{definition}[Model class~$\cM^m$] \label{Def:Model_Class}
    A model~\eqref{eq:Mod} belongs to the model class $\cM^{m}$, written $(f, g, h) \in\cM^{m}$, if it satisfies \Cref{ass:BIF} and 
    the internal dynamics~\eqref{eq:zero_dyn} satisfy the following \emph{bounded-input, bounded-state} (BIBS) condition:
    \begin{equation}\label{eq:BIBO-ID}
        \fa c_0 >0  \ex c_1 >0  \fa  \eta^0\in\R^{n-m}
       \fa  \zeta\in L^\infty(\Rp,\R^{m}):\ \Norm{\eta^0}+ \SNorm{\zeta}  \leq c_0 \!\implies\! \SNorm{\eta (\cdot;0,\eta^0,\zeta)} \leq c_1,
    \end{equation}
    where $\eta (\cdot;0, \eta^0,\zeta):\Rp\to\R^{n-m}$ denotes the unique global solution of~\eqref{eq:zero_dyn} when $y_{\rm M}$ is substituted by~$\zeta$.
\end{definition}
Note that the BIBS assumption \eqref{eq:BIBO-ID} ensures that the maximal solution $\eta (\cdot;0, \eta^0,\zeta)$ can in fact be extended to a global solution.
Since in many applications the usage of a linear model is reasonable, we emphasize that linear systems of the form
\begin{align*}
    \dot x(t) &= A x(t) + B u(t), \\
    y_{\rm M}(t) &= C x(t),
\end{align*}
with $A \in \R^{n \times n}$ and $C, B^\top \in \R^{m \times n}$ belong to $\cM^m$, provided that $C B$ is invertible and~\eqref{eq:minphaselin} is satisfied; this follows from \Cref{Ex:LTIsystem}.
\begin{remark}
    Although there are many systems belonging to both the model class~$\cM^m$ and the system class~$\cN^m$,
    neither the set of admissible models~$\cM^m$ is a subset of all considered systems~$\cN^m$ nor the opposite is true. 
    Every system  $(d, F, \oT)\in\cN^m$ which involves a time delay, e.g. $\oT(y)(t) = y(t-\sigma)$ with $\sigma > 0$, cannot belong to~$\cM^m$.
    On the other hand, for an example of a model belonging to $\cM^m$ but not to $\cN^m$ consider $(f,g,h)\in\cM^m$ with high-gain matrix 
    $\Gamma = h'g = \begin{smallbmatrix} -1 & 0\\ 0 & 1\end{smallbmatrix}$, which is invertible but not sign-definite.
    At first glance, it might seem advantageous to also require the high-gain matrix of a model to be sign-definite, in order to ensure a closer approximation of the real system. However, if the model is automatically generated by means of a learning algorithm using system data, see~e.g.~\cite{lanza2023safe},
    this requirement might unnecessarily restrict the set of admissible models and limit the capabilities of the used learning process.
\end{remark}

\subsection{Controller structure} \label{Sec:ControllerStructure}
Before we establish the main result, we informally introduce some aspects of the robust funnel MPC algorithm.

\ \\
\textbf{Proper initialization strategy.}
While the model~\eqref{eq:Mod} lays out its system states in an explicit way, the
internal states of the system~\eqref{eq:Sys} are unknown.
Moreover, only the measurement of the system output~$y$ is available. 
However, the measurement of the current state of the model in Step~\eqref{agostep:FMPCFirst}
of \Cref{Algo:FMPC} (funnel MPC) is essential for its functioning. 
When applying {the control resulting from} MPC to 
system~\eqref{eq:Sys}, it is therefore necessary to 
initialize the current model state~$\hat{x}$ based on the measured system output~$\hat{y}$
and the previous prediction of the model state~$x^{\rm pre}=x(t_{k+1};t_k,\hat{x}_k,u_{\rm FMPC})$ in a more sophisticated way.
There are two reasonable possibilities to initialize~$\hat{x}$. 
One option is to choose the model output such that it coincides with the system output, i.e., 
\begin{equation}\label{eq:SetModelToSystemData}
    h(\hat{x}) = \hat{y}.
\end{equation}
However, since we cannot measure the internal state of the system, there are two ways to treat the internal dynamics~\eqref{eq:zero_dyn} of the model.
Either, we set $\hat x$ such that we do not manipulate the internal dynamics, i.e.,
\[
    \sbl 0, I_{n-m}\sbr\Phi(\hat{x})=\sbl 0, I_{n-m}\sbr\Phi(x^{\rm pre}),
\]
or we ``reset'' the internal dynamics of the model, i.e., for a fixed a-priori defined bound~$\xi\in\Rp$ for the internal dynamics, we initialize $\hat x$ such that 
\[
    \| [0,I_{n-m}]\Phi(\hat x) \| \le \xi .
\]
Using the diffeomorphism~$\Phi$ and the Byrnes-Isidori form~\eqref{eq:BIF},
it is in fact always possible to find a model state which satisfies one of these properties and which coincides with the system output as in~\eqref{eq:SetModelToSystemData},
as the following lemma shows.
\begin{lemma}\label{Lemma:NonTrivialInit}
    Let a model $(f,g,h)\in \cM^m$ as in~\Cref{Def:Model_Class} be given and let $\Phi$ be the diffeomorphism from \Cref{ass:BIF}.
    For $\xi \in \Rp$, $x^{\rm pre}\in\R^n$, and $\hat{y} \in\R^m$ the set 
    \begin{equation}\label{eq:SetOmegaNonEmpty}
        \tilde{\Omega}_{\xi}(x^{\rm pre},\hat{y}):=\setdef{ x \in \R^n }{
        \begin{array}{l}
            h(x)=\hat{y},\\
            \sbl 0, I_{n-m}\sbr\Phi(x)=\sbl 0, I_{n-m}\sbr\Phi(x^{\rm pre}) 
            \ \text{or} \ \| [0,I_{n-m}]\Phi(x) \| \le \xi
        \end{array} }
    \end{equation}
    is non-empty. 
\end{lemma}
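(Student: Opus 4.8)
The plan is to exploit the structure of the Byrnes--Isidori form~\eqref{eq:BIF}, in which the diffeomorphism~$\Phi$ splits the state into its output part and an internal part. The key observation is that, by construction of the coordinate transformation $(y_{\rm M},\eta) = \Phi(x)$ together with $y_{\rm M} = h(x)$ (see \Cref{ass:BIF} and~\eqref{eq:output_dyn}), the first $m$ components of $\Phi$ reproduce the output, that is
\[
    \sbl I_m, 0\sbr\Phi(x) = h(x) \qquad \text{for all } x\in\R^n.
\]
Once this is noted, the claim reduces to a statement about the bijectivity of~$\Phi$.

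First I would record the internal part of the predicted state by setting $\eta^{\rm pre} := \sbl 0, I_{n-m}\sbr\Phi(x^{\rm pre})$. Since $\Phi$ is a diffeomorphism of~$\R^n$, it is in particular a bijection, so the point $\hat{x} := \Phi^{-1}\rbl\hat{y}, \eta^{\rm pre}\rbr$ is well-defined and satisfies $\Phi(\hat{x}) = (\hat{y}, \eta^{\rm pre})$.

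Next I would check that $\hat{x}$ belongs to $\tilde{\Omega}_{\xi}(x^{\rm pre},\hat{y})$. From the output-projection property above, $h(\hat{x}) = \sbl I_m, 0\sbr\Phi(\hat{x}) = \hat{y}$, so the first defining condition is met. Furthermore, $\sbl 0, I_{n-m}\sbr\Phi(\hat{x}) = \eta^{\rm pre} = \sbl 0, I_{n-m}\sbr\Phi(x^{\rm pre})$, which is exactly the first alternative in the second defining condition. Hence $\hat{x}\in\tilde{\Omega}_{\xi}(x^{\rm pre},\hat{y})$, and the set is non-empty.

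I do not anticipate any genuine obstacle; the only point that requires care is recognizing the output-projection property of~$\Phi$, after which bijectivity does all the work. It is worth noting that this construction always realizes the first alternative (matching the internal component of~$x^{\rm pre}$), irrespective of~$\xi$. The second alternative could be invoked just as well: choosing $\hat{x} := \Phi^{-1}(\hat{y}, 0)$ yields $\Norm{\sbl 0, I_{n-m}\sbr\Phi(\hat{x})} = 0 \le \xi$ since $\xi\in\Rp$, so either choice establishes non-emptiness.
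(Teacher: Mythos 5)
Your proof is correct and follows essentially the same route as the paper: both rest on the output-projection identity $[I_m,0]\Phi(\cdot)=h(\cdot)$ and the bijectivity of $\Phi$ to exhibit an explicit witness of the form $\Phi^{-1}(\hat y,\cdot)$. The only difference is cosmetic — the paper takes $\Phi^{-1}(\hat y,0_{n-m})$, realizing the second alternative via $0\le\xi$, which is exactly the variant you mention in your closing remark.
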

\begin{proof}
    Let $z:=\Phi^{-1}(\hat{y},0_{n-m})$. Then, recalling $[I_m,0]\Phi(\cdot) = h(\cdot)$,
    we have $h(z)=\hat{y}$. 
    Further, $\Norm{[0,I_{n-m}]\Phi(z)}=\Norm{[0,I_{n-m}]\Phi(\Phi^{-1}(\hat{y},0_{n-m}))} = 0 \leq\xi $.\
    Thus, $z$ is an element of $\tilde{\Omega}_{\xi}(x^{\rm pre},\hat{y})$. 
\end{proof}
Without choosing the model output such that it coincides with the system output, i.e. without fulfilling \eqref{eq:SetModelToSystemData}, 
another option to initialize the model is to allow for a (temporary) open-loop operation of~\Cref{Algo:FMPC}, meaning that we allow initializing 
the current model state~$\hat{x}$ with the previous prediction~$x^{\rm pre}$.
This allows to initialize the model without computing the diffeomorphism~$\Phi$ and the Byrnes-Isidori form~\eqref{eq:BIF} (if it was not possible to choose a model in this form).
The following definition formalizes these possibilities.
\begin{definition}\label{Def:Initialization}
    Let a model $(f,g,h)\in \cM^m$ as in~\Cref{Def:Model_Class} be given and let $\Phi$ be the diffeomorphism from \Cref{ass:BIF}.
    Let $\xi \in \Rp$, $x^{\rm pre}\in\R^n$, and $\hat{y} \in\R^m$. Using~\eqref{eq:SetOmegaNonEmpty}  define the set
    \[
        \Omega_{\xi}(x^{\rm pre}, \hat{y}):=\tilde{\Omega}_{\xi}(x^{\rm pre},\hat{y})
        \cup
        \cbl
        x^{\rm pre}
        \cbr.
    \]
    We call $\hat{x}\in\Omega_{\xi}(x^{\rm pre},\hat{y})$ a \emph{proper initialization} and 
    a function $\kappa_{\xi}:\R^n\times\R^m\to\R^n$ with $\kappa_{\xi}(x,y)\in \Omega_{\xi}(x,y)$ for all $(x,y)\in\R^n\times\R^m$ a \emph{proper initialization strategy}.
\end{definition}
We emphasize that, for $x^{\rm pre}\in\R^n$ and~$\hat{y}\in\R^m$, 
there always exists a proper initialization, not only because of $x^{\rm pre}\in\Omega_{\xi}(x^{\rm pre},\hat{y})$, but also according to~\Cref{Lemma:NonTrivialInit}.\\

\textbf{Funnel boundary and activation function.}
In \Cref{Algo:RobustFMPC} (robust funnel MPC) we choose the funnel function for the funnel controller very specifically. Namely, we use $\vp = \psi - \|e_{\rm M}\|$, where $e_{\rm M} = y_{\rm M} - y_{\rm ref}$.
This choice reflects the following idea:
If the error $e_{\rm M}$ is small, then the funnel boundary $\vp$ is approximately given by the MPC funnel boundary $\psi$.
If, however, the error $e_{\rm M}$ is close to $\psi$, then $\vp$ becomes tight, such that the system's
output is forced to be very close to the model's output.
This means, whenever the tracking is critical, the system is forced to behave very similar to the model such that even in critical situations it is reasonable to use  \emph{model} predictive control.
The choice of $\vp$ ensures that the tracking error evolves within the funnel $\psi$, i.e., we have
\[      
\forall \, t \ge 0 \, : \  \| y(t) - y_{\rm ref}(t) \| < \psi(t). 
\]

\noindent
Besides the particular choice of $\vp$ we will utilize an \emph{activation function}, i.e. a continuous function $\beta : [0,1] \to [0,\beta^+]$, $\beta^+ > 0$,
in order to activate or deactivate the funnel control signal $u_{\rm FC}$, depending on the magnitude of the error $e_{\rm M}$. 
Compared to the funnel controller without activation function, the incorporation of the activation function~$\beta(\cdot)$ in the control law~\eqref{alg:eq:FC} results in a scaling of the gain $(N \circ \alpha)(\cdot)$.
However, since adaption of the gain is not affected by~$\beta(\cdot)$, the resulting control~$u_{\rm FC}$ is as large as required. 
Using an activation function in the funnel control law is justified by the following result.
\begin{prop} \label{Prop:FC_dist}
Consider a system~\eqref{eq:Sys} with $(d,F,\oT) \in \cN^m$ as in~\Cref{Def:system-class}.
Let $y^0\in \cC([-\sigma,0],\R^m)$, $\sigma\ge 0$, $D \in  L^\infty(\Rp,\R^m)$, $\beta\in\con([0,1],[0,\beta^+])$ be an activation function with $\beta^+>0$, $\rho \in W^{1,\infty}(\Rp, \R^m)$ and $\vp \in \cG$ be given such that $\|y^0(0) - \rho(0)\| < \vp(0)$.
Then the application of
\begin{equation*}
    u(t) =  \beta( \| e(t)\slash\vp(t) \| ) (N \circ \alpha) (\| e(t)\slash\vp(t) \|^2 )  e(t)\slash\vp(t), \quad e(t) := y(t) - \rho(t),
\end{equation*}
to the system 
\begin{equation*}
    \dot y(t) =  F\big(d(t),\oT(y)(t), D(t)+u(t)\big), \quad y|_{[-\sigma,0]}=y^0,
\end{equation*}
yields a closed-loop initial value problem, which has a solution, every solution can be maximally extended, and every maximal solution $y: [0,\omega) \to \R^m$ has the following properties
\begin{enumerate}[label = (\roman{enumi})]
    \item the solution is global, i.e., $\omega = \infty$, \label{assertion_omega_infty}
    \item all signals are bounded, in particular, $u, \dot y \in L^\infty(\Rp,\R^m)$ and $y \in L^{\infty}([-\sigma,\infty),\R^m)$, \label{assertion_bounded_signals}
    \item the tracking error evolves within prescribed error bounds, i.e., \label{assertion_funnel_property}
    \begin{equation*}
        \forall \, t \ge 0 \, : \  \| y(t) - \rho(t) \| <\vp(t).
    \end{equation*}
\end{enumerate}
\end{prop}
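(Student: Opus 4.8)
The plan is to adapt the standard funnel‑control argument (as in \cite[Thm.~1.9]{BergIlch21}) to the present setting, whose two novelties are the bounded input disturbance $D$ and the activation function $\beta$; both are absorbed by the \emph{perturbation} high‑gain property in \Cref{Def:system-class}~\ref{Item:high-gain-prop}. First I would recast the closed loop as a functional differential equation on the ``open funnel'': setting $e = y - \rho$ and $k = \Norm{e/\vp}$, the feedback $u = \beta(k)(N\circ\alpha)(k^2)\,e/\vp$ is well defined as long as $k < 1$ (using the bijectivity of $\alpha$ on $[0,1)$ and continuity of $F$, $\beta$, $N$). Invoking the existence theory for Carath\'eodory solutions of such equations — which applies thanks to the causality and local Lipschitz properties of $\oT$ in \Cref{Def:system-class}~\ref{Item:Operator-class} — yields a maximal solution $y:[-\sigma,\omega)\to\R^m$, $\omega\in(0,\infty]$, with $\Norm{e(t)}<\vp(t)$ on $[0,\omega)$, and every solution extends maximally. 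A standard boundary‑of‑the‑domain argument then reduces all three assertions to showing that the error stays uniformly away from the funnel boundary.

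The heart of the proof is therefore to establish a constant $\e_0\in(0,1)$ with $\Norm{e(t)}\le(1-\e_0)\vp(t)$ for all $t\in[0,\omega)$. I would first extract a‑priori bounds valid on $[0,\omega)$: confinement in the funnel gives $\Norm{y(t)}\le\SNorm{\rho}+\SNorm{\vp}=:c_0$, so the BIBO property bounds $\oT(y)$, and together with $d,D\in L^\infty$ this places the first two arguments of $F$ and the disturbance in fixed compact sets $K_p,K_q,K_m$. These compacta (and the bound $c_0$) fix, via \ref{Item:high-gain-prop}, a threshold $\nu\in(0,1)$ and the function $\chi$ with $\sup_s\chi(s)=\infty$. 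The repelling mechanism is read off from
\[
    \tfrac{1}{2}\ddt\Norm{e(t)}^2 = \langle e(t),\, F(d(t),\oT(y)(t),D(t)+u(t))\rangle - \langle e(t),\,\dot\rho(t)\rangle ,
\]
where, choosing $v=-e/\vp$ (so that $\nu\le\Norm{v}=k\le1$ once $k\ge\nu$), the gain $s=\beta(k)(N\circ\alpha)(k^2)$ and $\Delta=D(t)$, the input satisfies $D+u=\Delta-sv$; hence $\langle e,F\rangle\le-\vp\,\chi(s)$. Since $s$ grows unboundedly as $k\to1$ (because $\alpha(k^2)\to\infty$, $N$ is surjective, and $\beta$ stays positive near the boundary) and $\sup_s\chi(s)=\infty$, the first term dominates the bounded term $\langle e,\dot\rho\rangle$, forcing $\ddt\Norm{e}^2<0$ whenever $k$ is close to $1$. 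A contradiction/comparison argument then yields the uniform gap $\e_0$.

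With the gap in hand the three assertions follow in the usual order: it bounds the gain, hence $u\in L^\infty$, and evaluating $F$ on the compacta bounds $\dot y$, giving \ref{assertion_bounded_signals}; boundedness precludes a finite escape time and, the error staying strictly inside the funnel, the solution can always be continued, so $\omega=\infty$, i.e.\ \ref{assertion_omega_infty}; finally \ref{assertion_funnel_property} is immediate from $\Norm{e(t)}\le(1-\e_0)\vp(t)<\vp(t)$.

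I expect the core estimate to be the main obstacle, for two intertwined reasons. First, invoking the perturbation high‑gain property correctly requires the additive disturbance $D$ to be absorbed into $\Delta$ uniformly over $[0,\omega)$ — this is exactly what \ref{Item:high-gain-prop} is designed for, but matching signs ($v=-e/\vp$, $s$ = gain, $\Delta=D$) and verifying membership in the compacta must be done carefully. Second, and more delicately, the gain is neither monotone nor sign‑definite near the boundary, since $N$ is merely a surjection and $\beta$ only a continuous scaling; hence the pointwise estimate $\ddt\Norm{e}^2<0$ cannot be claimed at every instant, and one must argue — as in the classical funnel‑control analysis — that the \emph{unboundedness} of the gain together with $\sup_s\chi(s)=\infty$ suffices to repel the error, and in particular that the activation function (which must remain positive near $k=1$) does not destroy the feedback's ability to become as large as required.
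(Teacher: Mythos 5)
Your proposal follows essentially the same route as the paper, whose proof simply declares the result a straightforward modification of \cite[Thm.~1.9]{BergIlch21} using the perturbation high-gain property (to absorb the bounded disturbance $D$ as the $\Delta$-argument of $\chi$) together with \Cref{lemma:ActivationSurjective} (surjectivity of the $\beta$-scaled gain $(\beta\circ\sqrt{\alpha^{-1}})\cdot N$); your sketch fills in exactly these two ingredients. The only slip is the phrase ``$s$ grows unboundedly as $k\to1$'' --- with $N$ merely a surjection the gain oscillates rather than diverges --- but you correctly retract this in your final paragraph and replace it with the right statement, namely that the composite gain, thanks to $\beta$ remaining positive at the boundary, can still attain arbitrarily large values for $k$ arbitrarily close to $1$, which is precisely the content of the paper's auxiliary lemma.
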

The proof is relegated to \Cref{Sec:proofs}.

Since very small deviations between $y(t)$ and $y_{\rm M}(t)$ can be neglected, we use the term  $\beta(\| e_{\rm S}(t)\slash\vp(t)\|)$, where $e_{\rm S} = y - y_{\rm M}$, which can be set to zero when $e_{\rm S}$ is small.
A reasonable and simple choice would be
    \begin{equation*}
        \beta(s) = \begin{dcases}
            0, & s \le S_{\rm crit}, \\
            s-S_{\rm crit}, & s \ge S_{\rm crit},
        \end{dcases}
    \end{equation*}
for $S_{\rm crit} \in (0,1)$. In this particular case we may set $\beta^+ = 1-S_{\rm crit}$.
In the context of machine learning, in particular, artificial neural networks, this type of functions is known as \emph{rectified linear unit} (ReLU).
Note that $\beta$ defined above satisfies $\beta(S_{\rm crit}) = 0$, whereby it is a continuous function and thus the funnel controller contributes continuously to the overall control signal. 

\noindent
\textbf{Robust funnel MPC algorithm.}
With the definitions and concepts introduced so far at hand, we are in the position to establish the \emph{robust funnel MPC algorithm}.
\begin{algo}[Robust funnel MPC]\label{Algo:RobustFMPC}\ \\
    \textbf{Given:}
    \begin{itemize}
        \item instantaneous measurements of the output $y(t)$ of system~\eqref{eq:Sys},
            reference signal $y_{\rf}\in W^{1,\infty}(\Rp,\R^{m})$,
            funnel function $\psi\in\cG$,
        \item model~\eqref{eq:Mod} with $(f,g,h)\in\cM^m$ as in~\Cref{Def:Model_Class} and diffeomorphism~$\Phi$ as in \Cref{ass:BIF},
            stage cost function~$\ell$ as in~\eqref{eq:stageCostFunnelMPC},
            $\xi\in\Rp$,
            a proper initialization strategy~$\kappa_{\xi}:\R^n\times\R^m\to\R^n$ as in~\Cref{Def:Initialization},
            bound $M>0$ for the MPC control signal, and the initial value $x^0$ of the model's state such that
    \begin{equation} \label{eq:ModelInitalValues}
            x^0\in X^0:=
            \setdef{x\in\R^n}
            {\begin{array}{l}
                \Norm{y(0) - h(x)}<\psi(0)-\Norm{h(x)-y_{\rf}(0)},\\
                \Norm{[0,I_{n-m}]\Phi(x)} \le \xi
            \end{array} },
    \end{equation}
        \item a surjection $N \in \con(\Rp,\R)$,
        a bijection $\alpha \in\con([0,1), [1,\infty))$, and an activation function~$\beta\in\con([0,1],[0,\beta^{+}])$ with $\beta^{+}>0$.
    \end{itemize} 
    \textbf{Set} time shift $\delta >0$, prediction horizon $T\geq\delta$, and index $k:=0$.\\
    \textbf{Define} the time sequence~$(t_k)_{k\in\N_0} $ by $t_k := k\delta$ and the first element
    of the sequence of  predicted states~$(x^{\rm pre}_k)_{k\in\N_0}$  by $x^{\rm pre}_0:=x^0$.\\
    \textbf{Steps:}
    \begin{enumerate}[label=(\alph{enumi}), ref=(\alph{enumi}), leftmargin=*]
    \item \label{agostep:RobustFMPCFirst} 
    Obtain a measurement $y(t_k) =: \hat y_k$ of the system output~$y$ at time~$t_k$, and 
    choose a \emph{proper initialization} $\hat x_k = \kappa_{\xi}(x^{\rm pre}_{k},\hat y_k)
    \in \Omega_{\xi}(x^{\rm pre}_k,\hat{y}_k)$ for the model. 
    \item Compute a solution $u_{\rm FMPC}\in L^\infty([t_k, t_k +T],\R^{m})$ of the optimization problem
    \begin{equation}\label{eq:RobustFMPCOCP}
            \mathop
            {\operatorname{minimize}}_{\substack{u\in L^{\infty}([t_k, t_k+T],\R^{m}),\\\SNorm{u}\leq M}}  \quad
            \int_{t_k}^{t_k+T}\ell(t,x(t;t_k,\hat{x}_k,u),u(t))\d t.
    \end{equation}
    \item \label{alg:predict}
    Predict the output~$y_{\rm M}$  of the model~\eqref{eq:Mod} on the interval $[t_k,t_{k+1}]$ 
    \[
        y_{\rm M}(t)=h(x(t;t_k,\hat{x}_k,u_{\rm FMPC})),
    \]
    and define the adaptive funnel $\phi: [t_k,t_{k+1}]\to\Rpp $ by 
    \begin{equation} \label{alg:eq:vp}
        \phi(t):=\psi(t)-\Norm{e_{\rm M}(t)}
        \quad\text{ with} \ 
        e_{\rm M}(t) = y_{\rm M}(t) - y_{\rf}(t).
    \end{equation}
    \item\label{alg:step:FC} Define the funnel control law as in~\eqref{eq:u_fc} with $y_{\rm M}$ and funnel function~$\vp$ as in~\eqref{alg:eq:vp} by
    \begin{equation} \label{alg:eq:FC}
        u_{\rm FC}(t) := \beta(\Norm{e_{\rm S}(t)\slash\phi(t)}) (N \circ \alpha)(\Norm{ e_{\rm S}(t)\slash\vp(t)}^2)  e_{\rm S} (t)\slash\vp(t)
        \ \text{ with } \
        e_{\rm S}(t) =y(t)-y_{\rm M}(t).
    \end{equation}
    \item Apply the feedback law
        \begin{equation} \label{eq:u}
            \mu:[ t_k,t_{k+1})\to\R^m, \quad \mu(t) 
            = u_{\rm FMPC}(t)+ u_{\rm FC}(t)
        \end{equation}
        to system~\eqref{eq:Sys}.
        Set the predicted state $x^{\rm pre}_{k+1}=x(t_{k+1};t_k,\hat{x}_k,u_{\rm FMPC})$,
        then increment~$k$ by $1$
        and go to Step~\ref{agostep:RobustFMPCFirst}.
    \end{enumerate}
\end{algo}
Since $\hat x_k$ is chosen via the the proper initialization strategy $\kappa_{\xi}(x^{\rm pre}_{k},\hat y_k)$ at every time instant $t_{k}$ for $k\in\N_0$,
in general $\hat{x}_k\neq x^{\rm pre}_{k}$, see \Cref{Lemma:NonTrivialInit}. In particular, it is possible that $\hat x_0 \neq x^0$.
\begin{remark}
    \Cref{Algo:RobustFMPC} combines the funnel MPC~\Cref{Algo:FMPC} proposed in~\cite{BergDenn21} with the model-free funnel
    control law from~\cite{BergIlch21} by introducing the steps~\ref{alg:predict} and \ref{alg:step:FC}. 
    Using the model output~$y_{\rm M}$ as a reference signal for the funnel controller
    allows the combined controller to benefit from the predictions made by the funnel MPC component, even if the system is in a safety critical state, 
    and ensures that the control signal $u_{\rm FMPC}$ does not act as a disturbance for the funnel controller. 
    In combination with the design of the funnel function~$\phi$, which also depends on the predictions from the MPC component, 
    this combined controller guarantees that the tracking error evolves within the prescribed performance funnel $\psi$, see~\Cref{Thm:RFMPC} below.
    The main mathematical difficulty lies in ensuring that the funnel MPC algorithm remains feasible, if the model is initialized with measurements of the system output via the initialization strategy~$\kappa_\xi$,
    and in adapting the results from~\cite{BergIlch21} to the current setting.
    The findings in~\cite{BergIlch21} cannot be directly applied, since the reference signal for the funnel controller is assumed to be a-priori given and to be continuous;
    both assumptions are not met in~\Cref{Algo:RobustFMPC}.
\end{remark}

\subsection{Main result}\label{Sec:MainResult}

In the following main result we show that the robust funnel MPC \Cref{Algo:RobustFMPC} is initially and recursively feasible and achieves  tracking of a given reference signal with prescribed behavior.
\begin{theorem} \label{Thm:RFMPC}
    Consider a system~\eqref{eq:Sys} with $(d,F,\oT) \in \cN^m$ as in~\Cref{Def:system-class} and choose a model~\eqref{eq:Mod} with $(f,g,h) \in \cM^m$ as in~\Cref{Def:Model_Class}.
    Let $\psi \in \cG$ and $y_{\rm ref} \in W^{1,\infty}(\Rp,\R^m)$ be given.
    Let  $y^0\in\con([-\sigma,0],\R^m)$ with $\sigma\geq 0$ be an initial {history function}  for system~\eqref{eq:Sys} with $y^0(0)\in\cD_0$.
    Then, for any $\xi\ge 0$, the set~$X^0$ in~\eqref{eq:ModelInitalValues} is non-empty and
    there exists $M>0$ such that the robust funnel MPC \Cref{Algo:RobustFMPC} with $\delta>0$ and $T\ge\delta$ is initially and recursively feasible for every $x^0 \in X^0$, 
    i.e., 
    \begin{itemize}
        \item at every time instance $t_k := k\delta $ for $k\in\N_0$ the OCP~\eqref{eq:RobustFMPCOCP} has a solution $u_k^*\in L^\infty([t_k,t_k+T],\R^m)$, and
        \item the closed-loop system consisting of the system~\eqref{eq:Sys} and the feedback law~\eqref{eq:u} 
    has a global solution $y : [-\sigma,\infty) \to \R^m $. 
    \end{itemize}
    Each global solution $y$ satisfies that
    \begin{enumerate}[label = (\roman{enumi}), ref=(\roman{enumi})]
        \item \label{Assertion:y_u_bounded}
        all signals are bounded, in particular, $u \in L^\infty(\Rp, \R^m)$ and $y \in L^\infty([-\sigma,\infty), \R^m)$, 
        \item \label{Assertion:tracking_error}
        the tracking error between the system's output and the reference evolves within prescribed boundaries, i.e.,
        \begin{equation*}
            \forall \, t \ge 0 \, : \ \|y(t)  - y_{\rm ref}(t) \| < \psi(t) .
        \end{equation*}
    \end{enumerate}
\end{theorem}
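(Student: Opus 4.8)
The plan is to decouple the proof into a model-based part and a system-based part, linked by the proper initialization strategy and the carefully chosen funnel function $\phi = \psi - \|e_{\rm M}\|$. First I would establish the model-based feasibility: for each fixed $k$, given the initialized state $\hat{x}_k \in \Omega_\xi(x^{\rm pre}_k, \hat{y}_k)$, I want to invoke the funnel MPC recursive feasibility result \cite[Thm.~2.10]{BergDenn21} to guarantee that the OCP~\eqref{eq:RobustFMPCOCP} has a solution $u_{\rm FMPC}$ and that the resulting model output satisfies $\|e_{\rm M}(t)\| = \|y_{\rm M}(t) - y_{\rm ref}(t)\| < \psi(t)$ on $[t_k, t_{k+1}]$, for a sufficiently large control bound $M$. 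The crux here is that feasibility at step $k$ requires the \emph{reinitialized} state to satisfy $h(\hat{x}_k) \in \cD_{t_k}$, i.e. $\|h(\hat{x}_k) - y_{\rm ref}(t_k)\| < \psi(t_k)$. This is where the system-side funnel property must feed back into the model side.

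The key coupling step is to show that the reinitialization preserves the strict funnel membership. By definition of $X^0$ in~\eqref{eq:ModelInitalValues}, initial feasibility holds: $h(x^0) \in \cD_0$ with the stronger estimate $\|y(0) - h(x^0)\| < \psi(0) - \|h(x^0) - y_{\rm ref}(0)\|$, which by the triangle inequality gives $\|y(0) - y_{\rm ref}(0)\| < \psi(0)$, i.e.\ $y^0(0) \in \cD_0$, and simultaneously $\|e_{\rm S}(0)\| < \phi(0)$, placing the funnel controller's initial error strictly inside its funnel $\cF_\phi$. For the inductive step I would argue: at time $t_{k+1}$, the proper initialization either keeps $h(\hat{x}_{k+1}) = \hat{y}_{k+1} = y(t_{k+1})$ (the $\tilde\Omega$ branch) or sets $\hat{x}_{k+1} = x^{\rm pre}_{k+1}$ (open-loop branch). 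In the first branch, $\|h(\hat{x}_{k+1}) - y_{\rm ref}(t_{k+1})\| = \|y(t_{k+1}) - y_{\rm ref}(t_{k+1})\|$, and I must show this is strictly below $\psi(t_{k+1})$; in the second branch it equals $\|e_{\rm M}(t_{k+1})\| < \psi(t_{k+1})$ from the previous model-side guarantee. So in both cases feasibility at the next step reduces to the bound $\|y(t_{k+1}) - y_{\rm ref}(t_{k+1})\| < \psi(t_{k+1})$, which is exactly the system-side tracking property I need to establish on each interval.

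The system-side analysis is where \Cref{Prop:FC_dist} does the heavy lifting. On each interval $[t_k, t_{k+1})$, the funnel control $u_{\rm FC}$ in~\eqref{alg:eq:FC} drives $e_{\rm S} = y - y_{\rm M}$ with reference $y_{\rm M}$ and funnel $\phi$; treating $u_{\rm FMPC}$ as the disturbance term $D$ in \Cref{Prop:FC_dist} (which is in $L^\infty$ since $\SNorm{u_{\rm FMPC}} \le M$), I obtain $\|e_{\rm S}(t)\| < \phi(t) = \psi(t) - \|e_{\rm M}(t)\|$ for all $t$ in the interval. The triangle inequality then gives
\begin{equation*}
    \|y(t) - y_{\rm ref}(t)\| \le \|y(t) - y_{\rm M}(t)\| + \|y_{\rm M}(t) - y_{\rm ref}(t)\| = \|e_{\rm S}(t)\| + \|e_{\rm M}(t)\| < \psi(t),
\end{equation*}
yielding assertion~\ref{Assertion:tracking_error}. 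The main obstacle, and where I expect most of the technical work to lie, is applying \Cref{Prop:FC_dist} across the concatenation of intervals: the proposition assumes an a-priori given \emph{continuous} reference, whereas here $y_{\rm M}$ is only piecewise defined and the funnel $\phi$ is reconstructed on each interval with a possible jump in the model state at each $t_k$ (since $\hat{x}_k \ne x^{\rm pre}_k$ in general). I would handle this by a careful gluing argument: verify that at each reinitialization the strict inequality $\|e_{\rm S}(t_k^-)\| < \phi(t_k^+)$ is preserved so that the hypothesis of \Cref{Prop:FC_dist} holds afresh on $[t_k, t_{k+1})$, and then patch the maximal solutions together, using the BIBS property of the model class $\cM^m$ and the BIBO property of the system class $\cN^m$ to secure uniform boundedness of all internal states and hence global existence ($\omega = \infty$) and assertion~\ref{Assertion:y_u_bounded}. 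Establishing that the jump in $\phi$ at reinitialization does not violate the strict funnel membership for $e_{\rm S}$ is the delicate point requiring the specific structure of $\Omega_\xi$ and the non-emptiness guaranteed by \Cref{Lemma:NonTrivialInit}.
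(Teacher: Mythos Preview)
Your overall architecture matches the paper's: decouple into (a) model-side OCP feasibility, (b) system-side funnel control on each interval $[t_k,t_{k+1})$ with $u_{\rm FMPC}$ playing the role of the bounded disturbance~$D$ in \Cref{Prop:FC_dist}, (c) the triangle inequality $\|y-y_{\rf}\|\le\|e_{\rm S}\|+\|e_{\rm M}\|<\psi$, and (d) a gluing argument across reinitializations. Your case distinction for the jump at $t_k$ (either $h(\hat x_k)=\hat y_k$ so $e_{\rm S}(t_k)=0$, or $\hat x_k=x_k^{\rm pre}$ so $\phi$ is continuous) is exactly what the paper does.

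There is, however, a genuine gap in your treatment of assertion~\ref{Assertion:y_u_bounded}. Invoking \Cref{Prop:FC_dist} on each interval yields only that $u_{\rm FC}$ is bounded on $[t_k,t_{k+1})$; the bound produced by that proposition depends on the reference $y_{k,\rm M}$ and the funnel $\phi_k$, which change with~$k$. Appealing to BIBS/BIBO gives you uniform boundedness of the state and of $\oT(y)$, but not of $u_{\rm FC}$: for that you need $\|e_{\rm S}(t)/\phi(t)\|\le\varepsilon<1$ with a \emph{single} $\varepsilon$ valid for all $t\ge 0$. The paper obtains this by redoing the high-gain estimate directly (rather than reinvoking \Cref{Prop:FC_dist}) with uniform constants, the key input being a uniform bound $\bar\lambda$ on $\esssup_{t\ge 0}\|\dot y_{\rm M}(t)\|$ that is independent of the measurement sequence $(\hat y_k)$, of $\delta$, and of the initialization strategy. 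This $\bar\lambda$ does not come from \cite[Thm.~2.10]{BergDenn21} but from a strengthened feasibility result (\Cref{Prop:OCP_has_solution} in the paper), which is also what allows a single $M$ to be fixed a~priori for \emph{all} admissible sequences $(\hat y_k)\subset\cD_{t_k}$, not just the one realised inductively. Your sketch treats $M$ as chosen step-by-step and does not isolate the uniform derivative bound on $y_{\rm M}$; without these two ingredients the argument for $u\in L^\infty(\Rp,\R^m)$ is incomplete.
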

The proof is relegated to \Cref{Sec:proofs}.

\begin{remark}
   With \Cref{Thm:RFMPC} at hand we comment on the difference between the proposed control scheme and a straightforward combination of a MPC scheme with a feedback control law.
    \begin{enumerate}[label = (\roman*), leftmargin=*]
        \item The combination of feedforward control with feedback control, i.e., the two degree of freedom controller design~\cite{SkogPost05}, is a popular approach. The specific combination of funnel control with feedforward control methods was investigated in~\cite{BergOtto19,BergDrue21}. 
        In a similar fashion it is possible to combine a MPC scheme, in particular, a funnel MPC scheme, with an additional feedback controller.
        This possibility (i.e., no feedback between funnel MPC and the actual system) is realized in the robust funnel MPC~\Cref{Algo:RobustFMPC} by allowing that the initialization at the beginning of a MPC cycle consists only of the previous prediction~$x_k^{\rm pre}$ of the current 
        \textit{model} state such that $\hat x_k := x_k^{\rm pre}$, which is a special instance of \emph{proper initialization}.
        In this case, the funnel MPC control signal $u_{\rm FMPC}$ can be computed offline using the given model.
        Then it is applied to the system as an open-loop control and the additional feedback control compensates errors,
        which occur due to deviations between the model and the system.
        This situation is illustrated as the second scenario in the simulation in \Cref{sec:sim}, cf.~\Cref{Fig:Chem_reac_noreinit_robust}.
        \item The alternative to the open-loop operation of \Cref{Algo:RobustFMPC} is a feedback induced by the utilization of measurements of the \textit{system} output $\hat y_k := y(t_k)$.
        Then, initializing the model properly with $\hat x_k \in \Omega_{\xi}(x_k^{\rm pre}, \hat y_k)$ ensures recursive feasibility of the MPC scheme on the one hand, 
        and on the other hand, the state $\hat x_k$ is chosen such that the model's output $h(\hat x_k)$ equals the 
        system's output $\hat y_k$, i.e., $ h(\hat x_k) = \hat y_k$.
        With this re-initialization at the beginning of the MPC cycle, the influence of the control signal $u_{\rm FMPC}$
        to the system is taken into account, and moreover, since the error between the model's and the system's output is zero,
        the optimal control signal may have a better effect on the system's tracking behavior.
        This situation is illustrated in the third scenario in the simulation in \Cref{sec:sim}, cf.~\Cref{Fig:Chem_reac_reinit_robust}.
    \end{enumerate}
\end{remark}

\begin{remark} \label{cor:u_bound}
 If the model~\eqref{eq:BIF} and the system~\eqref{eq:Sys} coincide up to an additive bounded disturbance, then we may derive an explicit bound for the overall control input $u = u_{\rm FMPC} + u_{\rm FC}$ a-priori.
    Consider a model~\eqref{eq:BIF} and let the system be given by
    \begin{align*}
         \dot y(t) &= p(y(t),\eta_{\rm S}(t)) + \Gamma(\Phi^{-1}(y(t),\eta_{\rm S}(t))) \, u(t) + d(t), && y(0) = y^0, \\
        \dot \eta_{\rm S}(t) &= q(y(t),\eta_{\rm S}(t)), && {\eta_{\rm S}(0) = \eta_{\rm S}^0,}
    \end{align*}
    where $d \in L^\infty(\Rp,\R^m)$ and the high-gain matrix satisfies $\Gamma(x)+ \Gamma(x)^\top > 0$ for all $x\in\R^n$.
    In this case the surjection in the funnel controller can be {replaced with the function} $N(s) = -s$, whereby the funnel control law simplifies to 
    $u_{\rm FC}(\cdot) = - \beta(\|w(\cdot)\|) \alpha(\|w(\cdot)\|^2) w(\cdot)$, $w := (y-y_{\rm M})/\vp$, cf. \cite[Rem.~1.8]{BergIlch21}.
    Let $\ve \in (0,1)$ be the smallest number such that
    \begin{align*}
        \beta(\ve) \alpha(\ve^2) \ve^2 = 
        \frac{ \|\dot \psi\|_\infty + 3 \max_{(y,\eta) \in K} \|p(y,\eta)\| +
        3 \max_{(y,\eta) \in K}\| \Gamma(\Phi^{-1}(y,\eta))\| M +
        \|d\|_\infty}{ \lambda_\Gamma },
    \end{align*}
    where $\lambda_\Gamma > 0$ is the smallest eigenvalue of $\Gamma(\cdot) + \Gamma(\cdot)^\top$ on $\Phi^{-1}(K)$, and the compact set $K$ is given in the proof of \Cref{Prop:OCP_has_solution}.
    Then, invoking the same arguments as in \emph{Step three} in the proof of \Cref{Thm:RFMPC}, the overall control satisfies
    \[
     \| u \|_\infty \le M + \beta(\ve) \alpha(\ve^2) .
    \]
    Note that although the bound on the control $u$ is explicitly given, this bound involves some non-trivial computations such as deriving the compact set $K$ in \Cref{Prop:OCP_has_solution} explicitly and computing the maximal values of the system parameters on this compact set. Moreover, the bound is conservative in the sense that in applications the maximal input will typically be much smaller.
    \end{remark}

\section{Simulation}\label{sec:sim}
We illustrate the application of the proposed control strategy \Cref{Algo:RobustFMPC} by a numerical simulation.
To this end, we consider a continuous chemical reactor and concentrate on the control goal to steer the reactor's 
temperature to a certain given value $y_{\rm ref}(t)$ within boundaries given by a function~$\psi(t)$.
The reactor's temperature should follow a given heating profile specified as
\begin{equation*}
    y_{\rm ref}(t) = 
    \begin{cases}
        y_{\rm ref,start} + \frac{y_{\rm ref,final} - y_{\rm ref,start}}{t_{\rm final}} t, & t \in [0,t_{\rm final}), \\
        y_{\rm ref,final}, & t \ge t_{\rm final} . 
    \end{cases}
\end{equation*}
Note that this heating profile has a kink at~$t = t_{\rm final}$.
Starting at $y_{\rm ref,start} = 270 \, \rm K$, the reactor is heated up to $y_{\rm ref,final} = 337.1 \, \rm K$ within the prescribed time~$[0,t_{\rm final}]$, here we choose~$t_{\rm final} = 2$.
During the heating phase, the tolerated temperature deviation from the heating profile decreases from~$\pm 24 \, \rm K$ to~$\pm 4.4 \, \rm K$ (time-varying output constraints).
After reaching the desired level, the temperature in the reactor is kept constant with deviation of no more than $\pm 4.4 \, \rm K$ after four minutes after beginning of the heating process.
In the reactor the first order and exothermic reaction Substance-1 $\to$ Substance-2 takes place.
Such a reactor can be modeled by the following system of equations, cf.~\cite{VielJado97a}
\begin{equation} \label{eq:chem_react_system}
    \begin{aligned}
        \dot y(t) & = b p(x_1(t),x_2(t),y(t)) - q y(t) + u(t), \\
        \dot x_1(t) &= c_1 p(x_1(t),x_2(t),y(t)) + d (x_1^{\rm in} - x_1(t)), \\
        \dot x_2(t) &= c_2 p(x_1(t),x_2(t),y(t)) + d (x_2^{\rm in} - x_2(t)), 
    \end{aligned}
\end{equation}
where $x_1$ is the concentration of the reactant Substance-1, $x_2$ the concentration of the product Substance-2 and $y$ describes the reactor temperature; $u$ is the feed temperature/coolant control input.
The value $x_i^{\rm in}$ is the (positive) concentration of Substance-$i$ ($i=1,2$) in the feed flow.
Further, the constant $b > 0$ describes the exothermicity of the reaction, $d>0$ is associated with the dilution rate and $q > 0$ is a constant consisting of the combination of the dilution rate and the heat transfer rate.
Further, $c_1,c_2 \in \R$ are the stoichiometric coefficients and $p : \Rp \times \Rp \times \Rp \to \Rp$ is the reaction heat; here the latter involves the Arrhenius function and is assumed to be given as
\begin{equation*} 
    p(x_1,x_2,y) = k_0 e^{- \frac{k_1}{y}} x_1,
\end{equation*}
where $k_0, k_1$ are positive parameters. As a model for this nonlinear reaction process we consider a linearization of system~\eqref{eq:chem_react_system}, obtained by linearizing the Arrhenius function around the desired final temperature $\bar y = 337.1K$ and $x_1 = \tfrac12 x_1^{\rm in}$.
This results in
\begin{equation*}
    p_{\rm lin}(x_1,x_2,y) = k_0 e^{-\frac{k_1}{\bar y}} x_1 + \frac{k_0 k_1 e^{-\frac{k_1}{\bar y}}}{\bar y^2}  \frac{x_1^{\rm in}}{2}  (y-\bar y).
\end{equation*}
We set $a_1 := \tfrac{k_0 k_1 e^{-\frac{k_1}{\bar y}}}{\bar y^2} \tfrac{x_1^{\rm in}}{2}$, $a_2 := k_0 e^{-\frac{k_1}{\bar y}}$ and define the expressions
\begin{equation*}
    A = \begin{bmatrix}
    b a_1 - q & b a_2 & 0 \\
    c_1 a_1 & c_1 a_2 - d & 0 \\
    c_2 a_1 & c_2 a_2 & -d
    \end{bmatrix}\in\R^{3\times3}, \quad
    D = \begin{bmatrix}
        - b a_1 \bar y \\
        -c_1 a_1 \bar y + d x_{1,\rm M}^{\rm in} \\
        -c_2 a_1 \bar y + d x_{2,\rm M}^{\rm in}
    \end{bmatrix} \in \R^{3}.
\end{equation*}
Then, with $x := (y_{\rm M},x_{1,\rm M}, x_{2,\rm M})^\top \in \R^3$ the model is given by
\begin{equation*}
\begin{aligned}
    \dot x(t) &= A x(t) + B u_{\rm FMPC}(t) + D, \\
    y_{\rm M}(t) &= C x(t),
\end{aligned}
\end{equation*}
where  $C = B^\top = [1,0,0] \in \R^{1\times 3}$.
We run the simulation on an interval of $[0, 4]$ minutes, and choose according to \cite{VielJado97a,IlchTren04} the following values for the parameters:
$c_1 = -1 = -c_2$, $k_0 = e^{25}$, $k_1 = 8700$, $d=1.1$, $q=1.25$, $x_1^{\rm in} = 1$, $x_2^{\rm in}=0$ and $b = 209.2$  and initial values $[y(0), x_1(0),x_2(0)]=[y_{\rm M}(0), x_{{\rm M},1}(0), x_{{\rm M},2}(0)]= [270,0.02,0.9]$. The funnel function is given by $\psi(t):=20\me^{-2t}+4$.
We simulate the following scenarios:
\begin{itemize}
    \item \textit{Case 1:} Funnel MPC without robustification, i.e., $u_{\rm FMCP}$ is computed via \Cref{Algo:FMPC} and applied to the system without an additional funnel control loop; this is shown in \Cref{Fig:Chem_reac_noreinit_norobust}.
    \item \textit{Case 2:} Robust funnel MPC with trivial proper re-initialization, i.e., $\hat x_k = x_k^{\rm pre}$ in Step~(a) of \Cref{Algo:RobustFMPC}; this is depicted in~\Cref{Fig:Chem_reac_noreinit_robust}.
    \item \textit{Case 3:} Robust funnel MPC with proper initialization according to the system's output, i.e., $\hat x_k \in \Omega_{\xi}(x_k^{\rm pre}, \hat y_k)$ such that $h(\hat x_k) = \hat y_k$ in Step~(a) of \Cref{Algo:RobustFMPC}; this is shown in~\Cref{Fig:Chem_reac_reinit_robust}.
\end{itemize}
As activation function we take the ReLU-like map
\begin{equation*}
\beta(s) = 
    \begin{cases}
        0, & s\le S_{\rm crit}, \\
        s- S_{\rm crit}, & s \ge S_{\rm crit},
    \end{cases}
\end{equation*}
where we choose $S_{\rm crit} = 0.5$, i.e., the funnel controller becomes active, if the error $y-y_{\rm M}$ exceeds $50 \%$ of the maximal distance to its funnel boundary.
In this example, we restrict the MPC control signal to $\| u_{\rm FMPC} \|_\infty \le 600$.
The input constraint is indicated by a dotted line in \Cref{Fig:Chem_reac_noreinit_norobust,Fig:Chem_reac_noreinit_robust,Fig:Chem_reac_reinit_robust}.
Further, we choose the design parameters $\lambda_u = 10^{-4}$, prediction horizon $T = 0.75$, and time shift $\delta = 0.05$.
In the following figures, the control signal generated via funnel MPC is labeled with the subscript~FMCP ($u_{\rm FMPC}$); the signal generated by the additional funnel controller is labeled with the subscript~FC ($u_{\rm FC}$).
\begin{figure}
\centering
    \begin{minipage}[t]{0.48\textwidth}
    \centering
   \includegraphics[width=\textwidth]{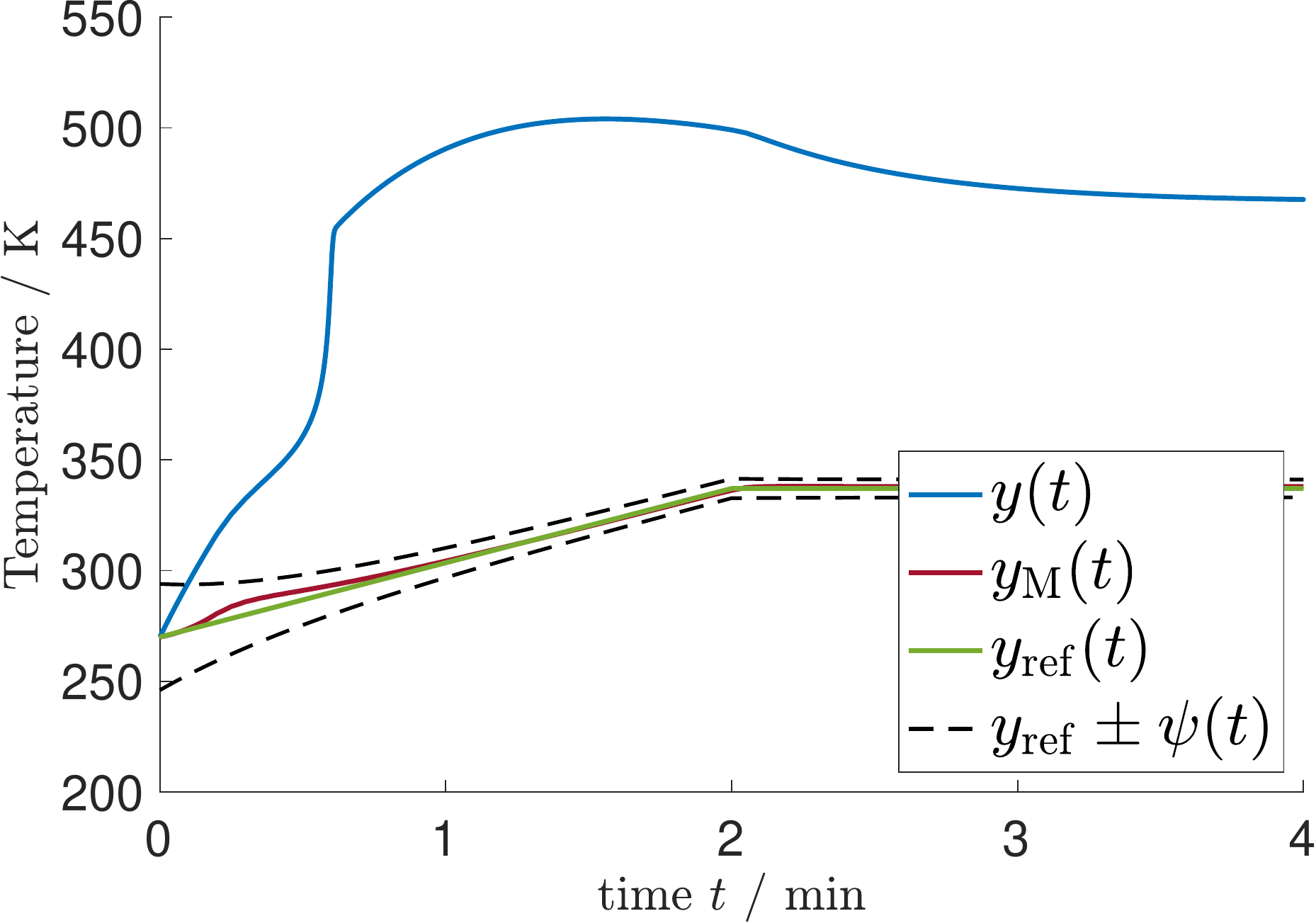}
\end{minipage}
\begin{minipage}[t]{0.48\textwidth}
\centering
    \includegraphics[width=\textwidth]{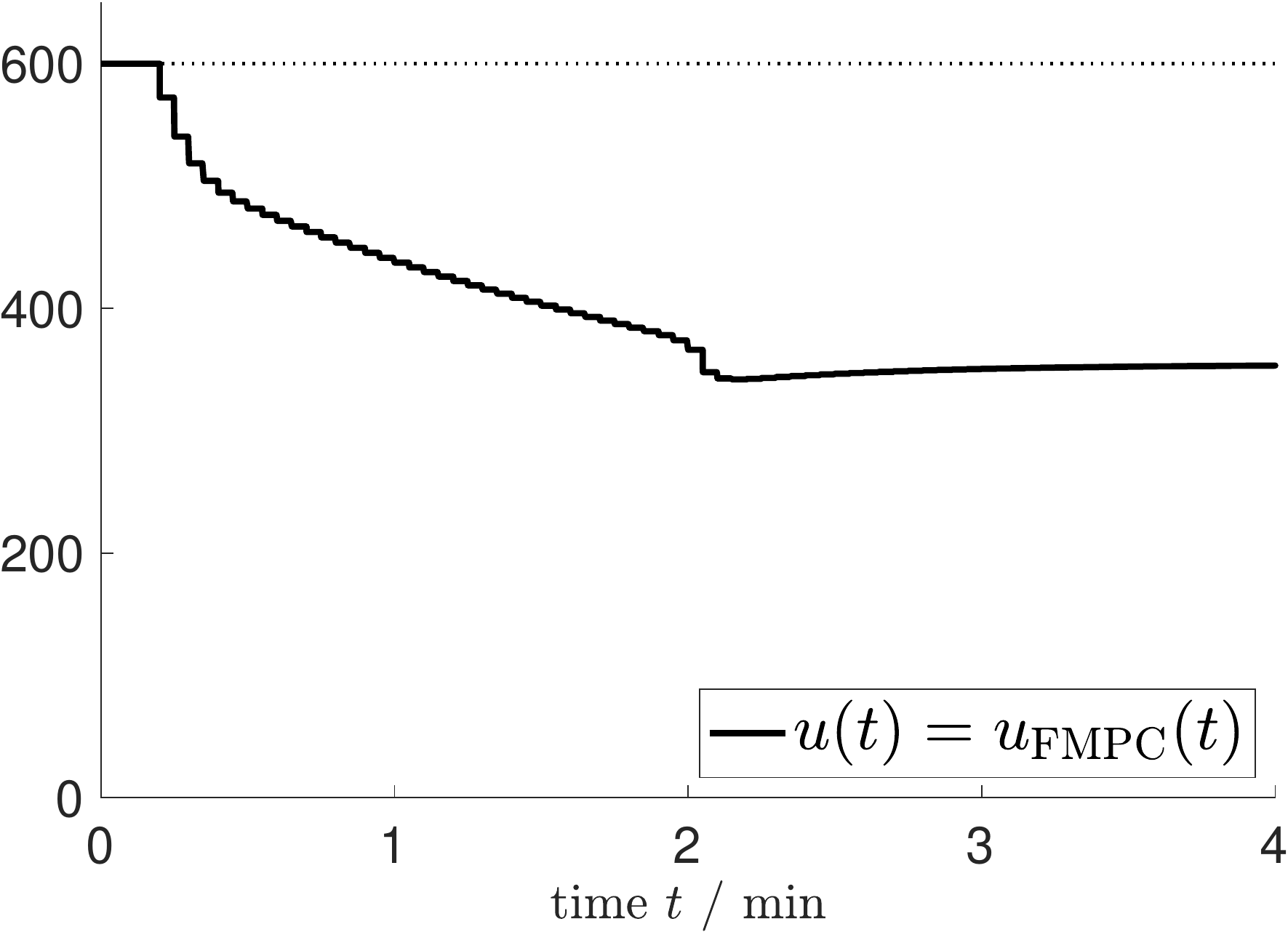}
\end{minipage}
    \caption{Application of the control computed by robust funnel MPC without additional funnel control feedback loop.}
\label{Fig:Chem_reac_noreinit_norobust}
\end{figure}
\Cref{Fig:Chem_reac_noreinit_norobust} shows the application of the control signal computed with funnel MPC \Cref{Algo:FMPC} in Case~1 to the system without an additional funnel control feedback loop. The error $e_{\rm M}(t) = y_{\rm M}(t) - y_{\rm ref}(t)$ between the model's output $y_{\rm M}(t)$ and the reference $ y_{\rm ref}(t)$ evolves within the funnel boundaries $\psi(t)$.
However, the control signal computed with funnel MPC using the linear model is not sufficient to achieve that the tracking error $e(t) = y(t) - y_{\rm ref}(t)$ evolves within the funnel boundaries $\psi(t)$.
Obviously, the deviation is induced during the initial phase.
After about two minutes, the linearized model is a good approximation of the system. In this region, the control $u_{\rm FMPC}$ has a similar effect on both dynamics; however, the error $y(t)- y_{\rm ref}(t)$ already evolves outside the funnel boundaries $\psi(t)$.
\begin{figure}
\centering
    \begin{minipage}[t]{0.48\textwidth}
    \centering
   \includegraphics[width=\textwidth]{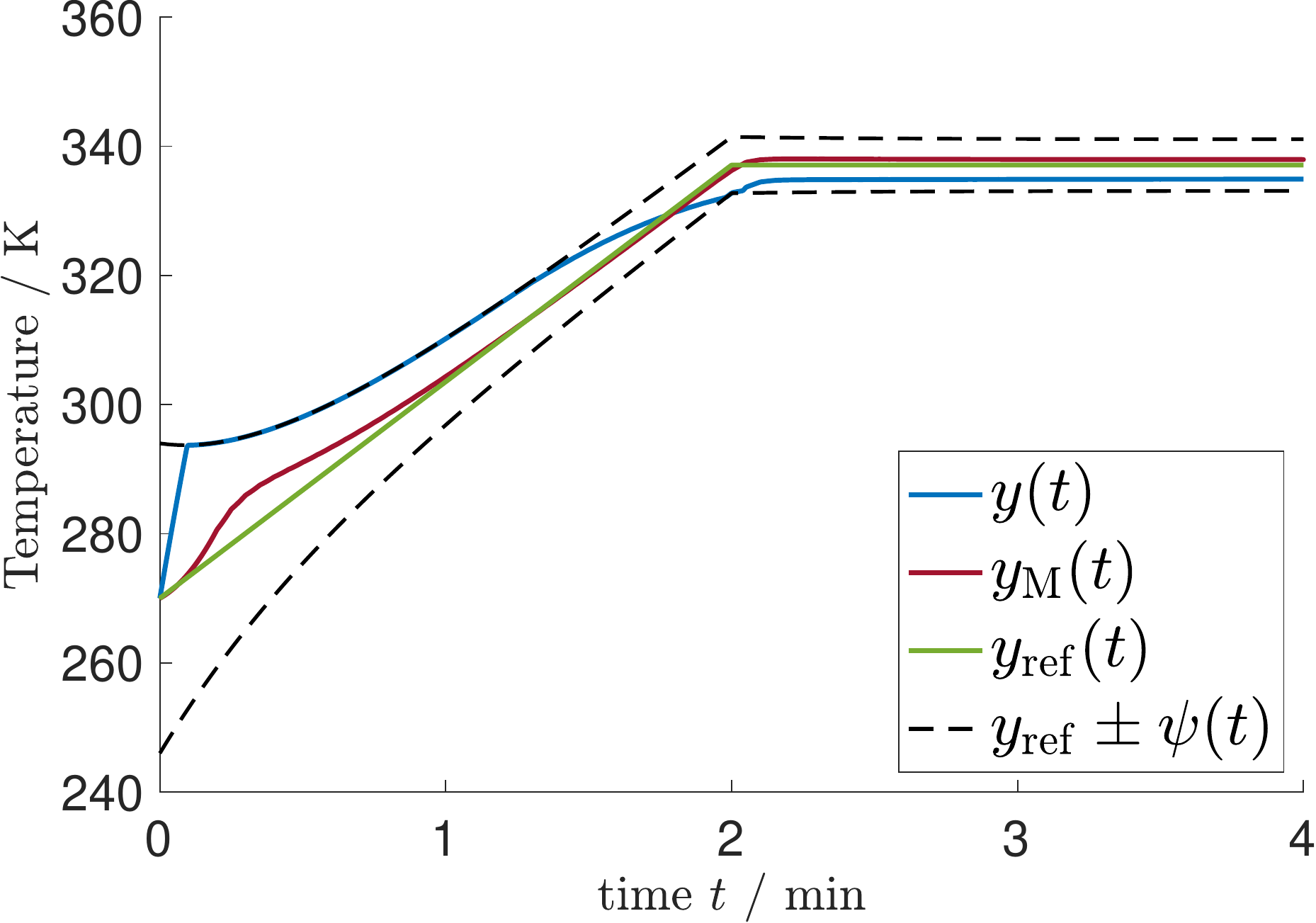}
\end{minipage}
\begin{minipage}[t]{0.48\textwidth}
\centering
    \includegraphics[width=\textwidth]{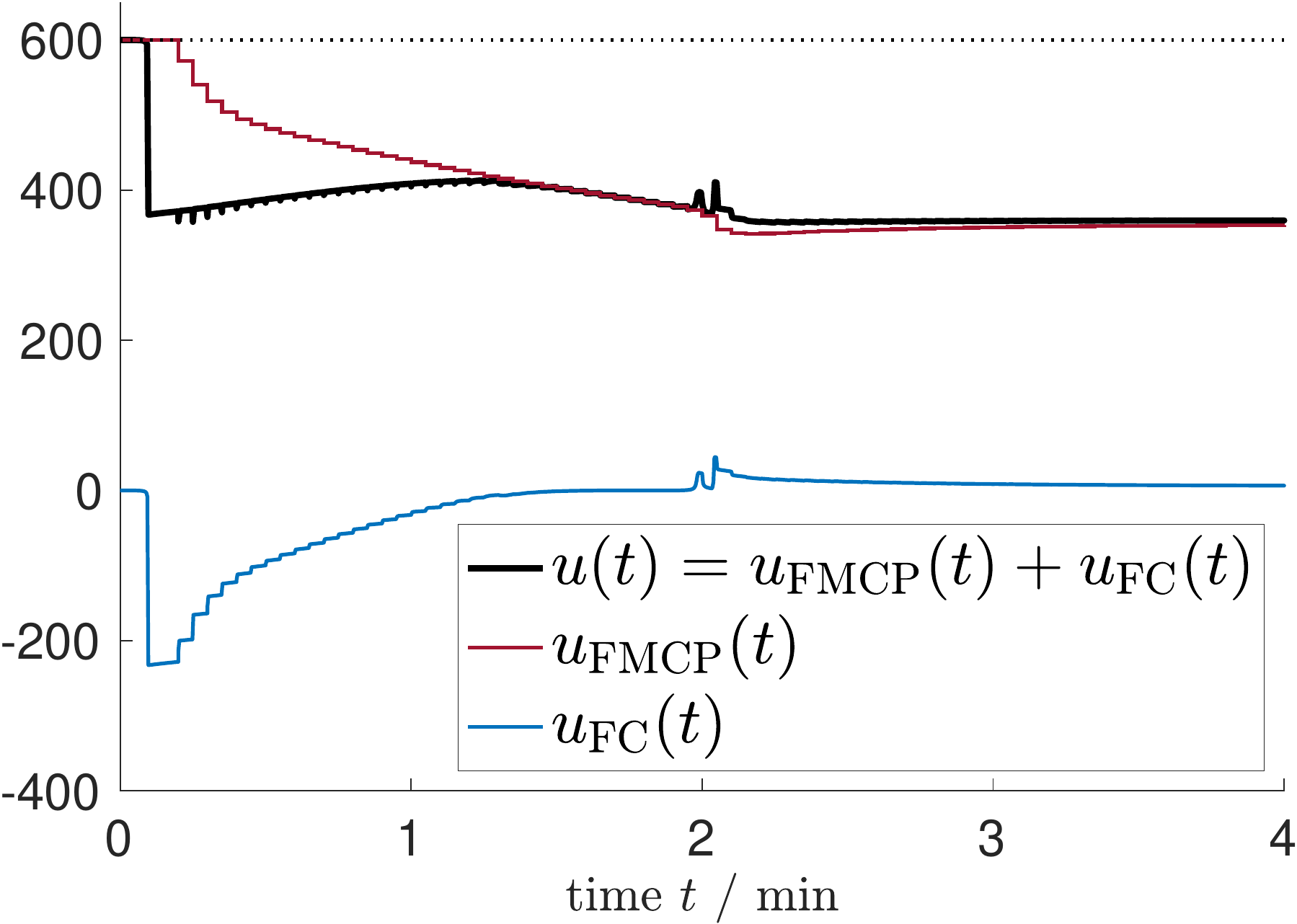}
\end{minipage}
    \caption{Application of the control computed by robust funnel MPC with additional funnel control feedback loop, without re-initialization.}
    \label{Fig:Chem_reac_noreinit_robust}
\end{figure}
\Cref{Fig:Chem_reac_noreinit_robust} shows the application of the control signal computed with robust funnel MPC \Cref{Algo:RobustFMPC} in Case~2, i.e., besides the funnel MPC control signal the additional funnel controller is applied in order to guarantee that the error $y(t) - y_{\rm ref}(t)$ evolves within the boundaries $\psi(t)$.
Since the model and the system do not coincide, the system evolves differently and hence the funnel controller has to compensate the model-plant mismatch.
\begin{figure}
\centering
    \begin{minipage}[t]{0.48\textwidth}
    \centering
   \includegraphics[width=\textwidth]{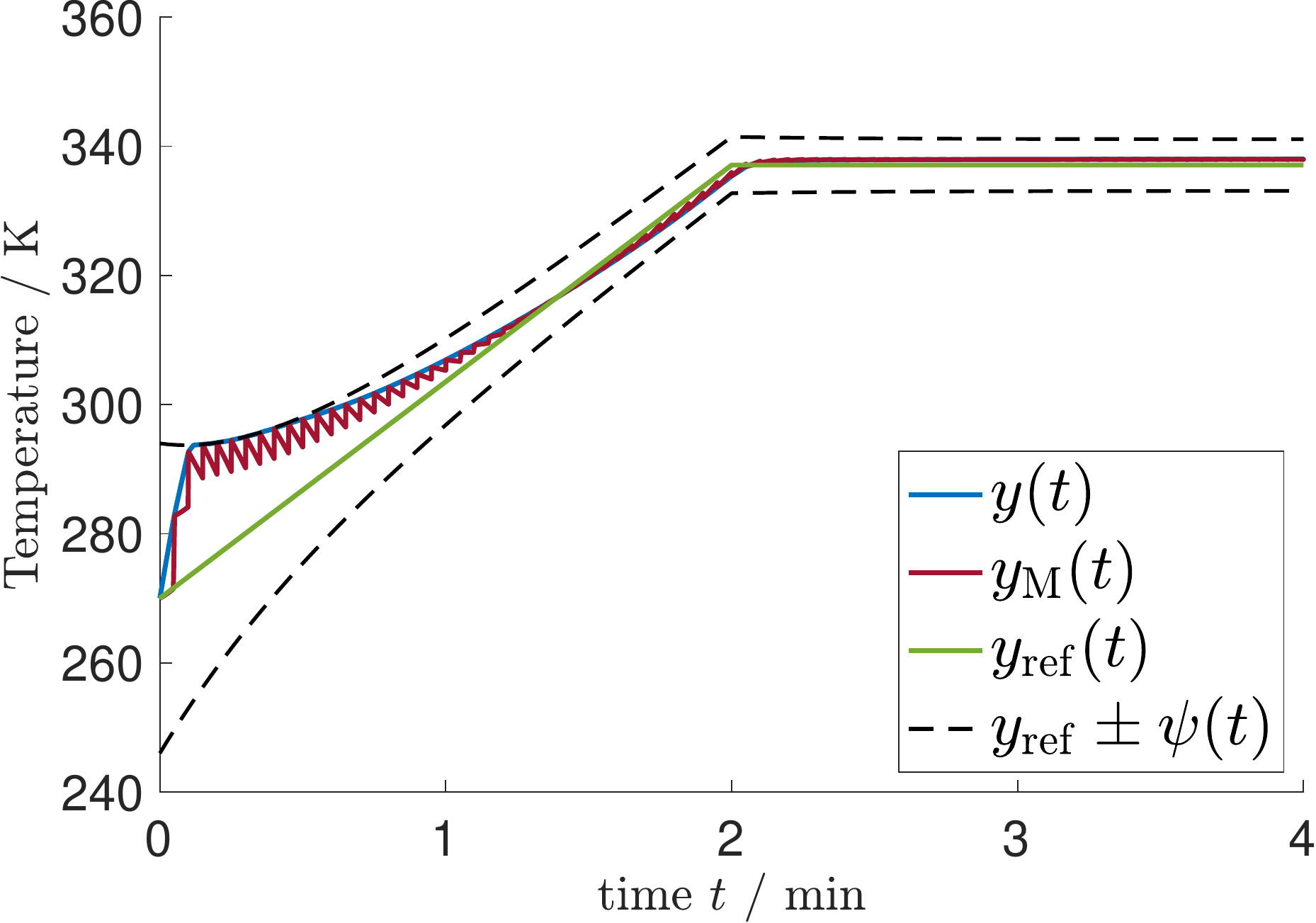}
\end{minipage}
\begin{minipage}[t]{0.48\textwidth}
\centering
    \includegraphics[width=\textwidth]{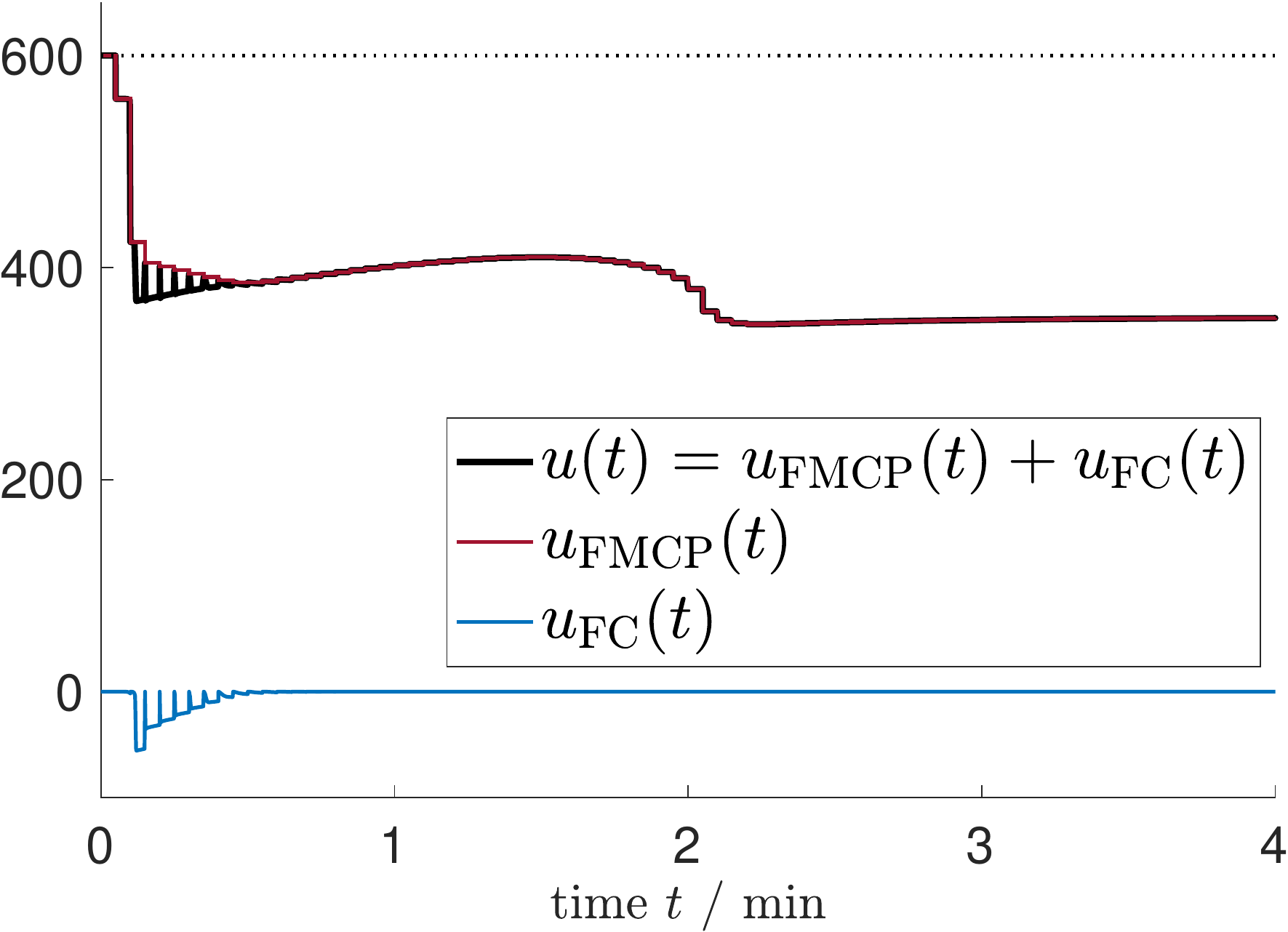}
\end{minipage}
    \caption{Application of the control computed by robust funnel MPC with additional funnel control feedback loop and with re-initialization.}
    \label{Fig:Chem_reac_reinit_robust}
\end{figure}
\Cref{Fig:Chem_reac_reinit_robust} shows the application of \Cref{Algo:RobustFMPC} in Case~3. Besides the additional application of the funnel controller, the model's state is updated with $h(\hat x_k) = \hat y_k$ at the beginning of every MPC cycle.
Note that 
in~\Cref{Fig:Chem_reac_reinit_robust} the funnel controller is inactive most of the time, i.e., the applied control signal can be viewed to be close to \emph{optimal} with respect to the cost function~\eqref{eq:stageCostFunnelMPC}, since it is computed via the OCP~\eqref{eq:RobustFMPCOCP}.

\section{Conclusion and Outlook} \label{Sec:Conclusion}

We proposed a two-component controller to achieve output reference tracking with prescribed
accuracy for unknown nonlinear systems.
One component is an MPC scheme, which uses a particular stage cost to guarantee reference tracking within prescribed error margins of the underlying model.
The MPC controller is safeguarded by the adaptive funnel controller, 
which compensates the error when necessary.
We rigorously proved initial and recursive feasibility of the combined 
\Cref{Algo:RobustFMPC}.
A key
feature of the proposed two-component controller is the possibility to update the model's state based on
measured system data during runtime.
This feature 
is realized via the newly introduced \emph{proper initialization strategy}~$\kappa$.
It allows to initialize the model in the optimal control problem based on
system data such that the resulting control is more likely to affect the real plant in the intended way.

Future research will focus on extracting criteria to find explicit and beneficial proper initialization strategies to take advantage of available measurement data. 
Moreover, we aim to exploit system data to improve the underlying model. 
In the recent work~\cite{lanza2023safe} first steps in this direction were initiated.

\section{Appendix} \label{Sec:proofs} 
Before we present the main proof, we establish some auxiliary results.
In \Cref{Sec:ExistenceSolOPC} we show the existence of a solution of the OPC~\eqref{eq:RobustFMPCOCP}.
In \Cref{Sec:FCResults} we state some results concerning the application and combination of the funnel controller with the MPC scheme.
Finally in \Cref{Sec:ProofMain} we provide a proof of the main result \Cref{Thm:RFMPC}.

\subsection{Existence of an optimal control} \label{Sec:ExistenceSolOPC}
The first proposition concerns the existence of a solution of the OCP~\eqref{eq:RobustFMPCOCP}.
\begin{prop} \label{Prop:OCP_has_solution}
    Consider the model~\eqref{eq:Mod} with $(f,g,h)\in\cM^m$ as in~\Cref{Def:Model_Class}.
    Let $\delta>0$, $\xi\geq 0$, $\psi\in\cG$, ~$y_{\rf}\in W^{1,\infty}(\Rp,\R^m)$
    and a proper initialization strategy~$\kappa_{\xi}:\R^n\times\R^m\to\R^n$  be given.
    Let the sequence $(t_k)_{k\in\N_0}$ be defined by $t_k=k \delta$ and 
    $(\hat{y}_k)_{k\in\N_0}$ be an arbitrary sequence with $\hat y_k\in\cD_{t_k}$ for all~$k\in\N_0$.
    Then there exists~$M>0$, independent of~$\delta$, such that for all $T\geq\delta$ and all
    $x_{0}^{\rm pre}\in
    \setdef{x\in\R^n}
    {
        h(x)\in\cD_{t_0},\ \Norm{[0,I_{n-m}]\Phi(x)} \le \xi
    }$ 
    the~OCP
    \begin{equation}\label{eq:Prop:OPC}
            \mathop
            {\operatorname{minimize}}_{\substack{u\in L^{\infty}([t_k, t_k+T],\R^{m}),\\\SNorm{u}\leq M}}  \quad
            \int_{t_k}^{t_k+T}\ell(t,x(t;t_k,\kappa_{\xi}(x_k^{\rm pre},\hat{y}_{k}),u),u(t))\d t
    \end{equation} 
    has a solution $u_k^\star\in L^{\infty}([t_k, t_k+T],\R^{m})$ for all~$k\in\N_0$,
    where $(x_{k}^{\rm pre})_{k\in\N_0}$ is defined by
    \[
        x_{k+1}^{\rm pre}:=x(t_{k+1};t_k,\kappa_{\xi}(x_k^{\rm pre},\hat{y}_{k}),u^\star_{k}).
    \]
    Moreover, the piecewise continuous function
    \begin{equation*}
        y_{\rm M} : \Rp \to \R^m, \quad t \mapsto \sum_{k \in \N_0} h(x(t;t_k,\kappa_{\xi}(x_k^{\rm pre},\hat{y}_{k}),u^\star_{k}))|_{[t_k,t_{k+1})}
    \end{equation*}   
     satisfies $y_M(t)\in\cD_t$ for all $t\ge 0$ and there exists~$\bar{\lambda}>0$ such that $\esssup_{t \ge 0}\Norm{\dot{y}_M(t)}\leq \bar{\lambda}$.
     The bound~$\bar{\lambda}$ is independent of~$(\hat{y}_k)_{k\in\N_0}$, $\delta$, $x_0^{\rm pre}$ and~$\kappa_{\xi}$.
\end{prop}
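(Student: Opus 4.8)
The plan is to transform the model into the Byrnes--Isidori form~\eqref{eq:BIF}, construct an explicit bounded feasibility control, and then combine a direct-method existence argument with an induction over~$k$ that propagates the funnel membership. The first preparatory step is to build the compact set~$K$ on which all relevant trajectories live. Since any output satisfying $y_{\rm M}(t)\in\cD_t$ obeys $\Norm{y_{\rm M}(t)} \le \SNorm{y_{\rf}} + \SNorm{\psi} =: c_y$, and since the proper initialization of \Cref{Def:Initialization} only ever resets the internal state to a value bounded by~$\xi$ (the reset option in~\eqref{eq:SetOmegaNonEmpty}) or leaves it unchanged (the continuation option, which introduces no jump in~$\eta$), the concatenated internal trajectory solves $\dot\eta = q(y_{\rm M},\eta)$ with only downward jumps to level~$\xi$. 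Applying the BIBS condition~\eqref{eq:BIBO-ID} on each segment between resets, together with the autonomy of~$q$, I would conclude a uniform bound $\Norm{\eta(t)} \le c_1$ and hence fix $K := \setdef{(y,\eta)}{\Norm{y}\le c_y,\ \Norm{\eta}\le \hat c_1}$ (with~$\hat c_1$ the BIBS bound over a single prediction horizon), on which $p$, $q$, $\Gamma$, and $\Gamma^{-1}$ are all bounded.

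Next I would produce the feasibility certificate that yields the uniform bound~$M$. Given a proper initialization $\hat x_k$ with $h(\hat x_k)\in\cD_{t_k}$, I define the interior target path $y^*(t) := y_{\rf}(t) + \tfrac{\psi(t)}{\psi(t_k)}\rbl h(\hat x_k) - y_{\rf}(t_k)\rbr$, which keeps a constant relative distance $\Norm{h(\hat x_k)-y_{\rf}(t_k)}/\psi(t_k) < 1$ from the reference and therefore stays strictly inside $\cD_t$ on the whole horizon, irrespective of~$T$. Because $\Gamma$ is invertible on~$K$, feedback linearization $u_{\rm fb}(t) := \Gamma\rbl\Phi^{-1}(y^*(t),\eta(t))\rbr^{-1}\sbl \dot y^*(t) - p(y^*(t),\eta(t))\sbr$ realizes $y_{\rm M}\equiv y^*$, and since $\SNorm{\dot y^*}\le \SNorm{\dot y_{\rf}} + \SNorm{\dot\psi}$ while $\Gamma^{-1}$ and $p$ are bounded on~$K$, one obtains $\SNorm{u_{\rm fb}}\le M$ with~$M$ depending only on~$K$, $\psi$ and $y_{\rf}$, hence independent of~$\delta$, $T$, $k$ and $\hat y_k$. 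As $y^*$ stays at relative distance bounded away from~$1$, the stage cost~\eqref{eq:stageCostFunnelMPC} along this trajectory is finite, so the admissible set of the OCP~\eqref{eq:Prop:OPC} is non-empty. \emph{This construction of a horizon-independent bounded feasible control is the main obstacle}, as it must be uniform in the starting point's proximity to the funnel boundary and in the internal-state level allowed by the initialization.

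Existence of the minimizer~$u_k^\star$ I would then obtain by the direct method: the control set $\setdef{u}{\SNorm{u}\le M}$ is weak-$*$ compact, the map $u\mapsto x(\cdot;t_k,\hat x_k,u)$ is continuous (weak-$*$ convergence of~$u$ forces uniform convergence of the states via a Gronwall/Arzel\`a--Ascoli argument on the compact prediction interval), the barrier part of~$\ell$ is continuous in the state, and the term $\l_u\Norm{u}^2$ is convex, hence weak-$*$ lower semicontinuous; equivalently, the affine-in-$u$ dynamics together with the convex-in-$u$ integrand make the extended velocity set convex, so a Filippov--Cesari-type existence theorem applies given the a priori boundedness furnished by~$K$. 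Any minimizer satisfies $J(u_k^\star)\le J(u_{\rm fb})<\infty$.

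Finally I would close the recursion and extract the derivative bound. Finite cost forces $y_{\rm M}(t)\in\cD_t$ for a.e.\ $t\in[t_k,t_k+T]$, whence $(y_{\rm M},\eta)\in K$ and $\Norm{\dot y_{\rm M}(t)} = \Norm{p + \Gamma u_k^\star} \le \sup_K\Norm{p} + \sup_K\Norm{\Gamma}\,M =: \bar\lambda$, a bound depending only on~$K$ and~$M$ and thus independent of $(\hat y_k)$, $\delta$, $x_0^{\rm pre}$ and~$\kappa_\xi$. Using this Lipschitz bound on $y_{\rm M}$ and the bound $\SNorm{\dot\psi}$, I would rule out the output touching the boundary even at an endpoint: if $\Norm{e_{\rm M}(t^*)} = \psi(t^*)$ then $w := \psi - \Norm{e_{\rm M}}$ has $w(t)\le (\bar\lambda+\SNorm{\dot\psi}+\SNorm{\dot y_{\rf}})\,|t^*-t|$, making the barrier integrand behave like $c/w$ near~$t^*$ with $c>0$ (since $\inf_{t\ge0}\psi(t)>0$), which is non-integrable and contradicts finite cost. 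Therefore $y_{\rm M}(t)\in\cD_t$ strictly on the closed interval, so in particular $h(x_{k+1}^{\rm pre}) = y_{\rm M}(t_{k+1})\in\cD_{t_{k+1}}$; together with the uniform internal-state bound this makes $\hat x_{k+1} = \kappa_\xi(x_{k+1}^{\rm pre},\hat y_{k+1})$ a feasible starting point for the next OCP, closing the induction and simultaneously establishing $y_{\rm M}(t)\in\cD_t$ for all $t\ge0$ and $\esssup_{t\ge0}\Norm{\dot y_{\rm M}(t)}\le\bar\lambda$.
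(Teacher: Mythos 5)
Your proposal is correct and shares the overall architecture of the paper's proof -- a compact set $K$ built from the boundedness of $\bigcup_{t\ge 0}\cD_t$ and the BIBS condition on the internal dynamics, a uniform input bound $M$ certifying non-emptiness of the feasible set, an induction over $k$ propagating funnel membership and internal-state boundedness through the proper initialization, and the derivative bound $\bar\lambda = \sup_K\|p\| + M\sup_K\|\Gamma\circ\Phi^{-1}\|$ at the end. The genuine difference is that the paper outsources its two hardest steps to \cite{BergDenn21}: Prop.~4.9 there supplies the uniform $M$ for which the feasible set $\cU(t_k,\hat x)$ is non-empty, and Thm.~4.6 supplies existence of a minimizer within that set. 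You instead prove both in place -- the scaled interior path $y^*(t)=y_{\rf}(t)+\tfrac{\psi(t)}{\psi(t_k)}(h(\hat x_k)-y_{\rf}(t_k))$ with feedback linearization gives an explicit, horizon- and $\delta$-independent feasibility certificate, and a Filippov--Cesari/direct-method argument plus the barrier non-integrability estimate gives the minimizer and the strict funnel membership on the closed interval. This buys a self-contained proof at the cost of length; the paper's route buys brevity at the cost of two external citations. The one place where you are looser than the paper is the uniform bound on the internal state: since the continuation option of \Cref{Def:Initialization} does not reset $\eta$ and the output part of the state may jump, the rigorous statement is that $\eta(t)$ always lies on a BIBS trajectory started in $\dB_\xi$ at the time of the \emph{last} reset and driven by a spliced, funnel-respecting output -- this is the paper's reachable-set construction $N^t_{t_i}$ with the explicit extension $\tilde\zeta$ of the driving signal. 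Your phrase ``the BIBS bound over a single prediction horizon'' is therefore misleading (the relevant time span between resets is unbounded), but since the BIBS condition~\eqref{eq:BIBO-ID} is global in time the bound you need does hold, and the argument goes through once that splicing step is written out.
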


\begin{proof}
\emph{Step one.} We introduce some notation.
We denote by $\cY_{\hat{\zeta}}(I)$ the set of all functions $\zeta\in\cR(I,\R^m)$ which start at $\hat{\zeta}\in\R^m$ and 
$\zeta-y_{\rf}$ evolves  
within the funnel given by~$\psi$ on an interval $I\subseteq\R_{\ge 0}$ of the form $I=[a,b]$ 
with $b\in(a,\infty)$ or $I=[a,b)$ with $b=\infty$:
\[
    \cY_{\hat{\zeta}}(I) := \setdef
            {\zeta\in \cR(I,\R^m)}
            {\zeta(\inf I) = \hat{\zeta},\ \fa t\in I:\ \zeta(t)\in\cD_t}.
\]                                      
Recall that 
for $k\in\N_0$, $\hat{\eta}\in\R^{n-m}$ and $\zeta\in \cR([t_k,\infty),\R^m)$,
 $\eta (\cdot;t_k, \hat{\eta},\zeta):[t_k,\infty)\to\R^{n-m}$ denotes the global solution of the initial value problem~\eqref{eq:zero_dyn}, $\eta(t_k)=\hat{\eta}$, where~$y_{\rm M}$ is substituted by~$\zeta$.
Define for $k\in\N_0$ and $t\geq t_k$ the set
\[
    N^{t}_{t_k}:=
    \setdef
    {\eta(t;t_k,\hat{\eta},\zeta)}
    {(\hat{\zeta},\hat{\eta})\in\cD_{t_k}\times{\dB}_{\xi},\ \zeta\in \cY_{\hat{\zeta}}([t_k,\infty)) },
\]
where $\dB_{\xi}:=\setdef{z\in\R^{n-m}}{\Norm{z}\leq\xi}$. Finally, define  the set
\[
    \cU(t_k,\hat{x}):= \setdef
    {u\in L^\infty([t_k,t_k+T],\R^m)}
    {
       \! \begin{array}{l}
            \SNorm{u}\leq M,\\
            \fa t\in [t_k,t_k+T]:\ h(x(t;t_k,\hat{x},u))\in\cD_t
        \end{array}\!\!
    }
\]
of all~$L^\infty$--controls~$u$ bounded by~$M>0$ which, if applied to the model~\eqref{eq:Mod},
guarantee that the error $e_{\rm M}=y_{\rm M}-y_{\rf}$ evolves within the funnel~$\cF_{\psi}$ on the interval~$[t_k,t_{k+1}]$.

\noindent
\emph{Step two.} For arbitrary~$k\in\N_0$, we make three observations:
\begin{enumerate}[label = (\roman{enumi})]
\item\label{Item:PredictionInitVal}
    Since~$\kappa_{\xi}$ is a proper initialization strategy and $\hat{y}_k\in\cD_{t_k}$ for all~$k\in\N_0$, the following holds: 
    \[
    x_k^{\rm pre}\in \Phi^{-1}
            \rbl
            \cD_{t_k}\times
            \bigcup_{i=0}^{k}N_{t_i}^{t_k}
            \rbr 
            \quad \Impl\quad
            \kappa_{\xi}(x_k^{\rm pre},\hat{y}_{k})\in\Phi^{-1}
            \rbl 
            \cD_{t_k}\times
            \bigcup_{i=0}^{k}N_{t_i}^{t_k}
            \rbr.
    \]
\item\label{Item:RecInitialVal}
    If, for~$\hat{x} \in 
    \Phi^{-1}
            \rbl
            \cD_{t_k}\times
            \bigcup_{i=0}^{k}N_{t_i}^{t_k}
            \rbr$,
    the set~$\cU(t_k,\hat{x})$ is non-empty and an element~$u\in\cU(t_k,\hat{x})$ is applied to the model~\eqref{eq:Mod}, then 
    \[
       x(t_{k+1};t_k,\hat{x},u)\in
      \Phi^{-1}
            \rbl
            \cD_{t_{k+1}}\times
            \bigcup_{i=0}^{k+1}N_{t_i}^{t_{k+1}}
            \rbr.
    \]
\item\label{Item:SolutionOptimizationProb}
    If, for~$\hat{x}\in\R^n$, the set $\cU(t_k,\hat{x})$ is non-empty, then the OCP
    \[
        \mathop
        {\operatorname{minimize}}_{\substack{u\in L^{\infty}([t_k, t_k+T],\R^{m}),\\\SNorm{u}\leq M}}  \quad
        \int_{t_k}^{t_k+T}\ell(t,x(t;t_k,\hat{x}),u(t))\d t
    \]
    has a solution~$u_k^{\star}\in\cU(t_k,\hat{x})$  according to~\cite[Thm.~4.6]{BergDenn21}.
\end{enumerate}
For~\ref{Item:PredictionInitVal} observe that $[I_m,0]\Phi(x) = h(x)$ for all $x\in\R^n$. 
To see \ref{Item:RecInitialVal}, let $\hat{x} \in 
    \Phi^{-1}
            \rbl
            \cD_{t_k}\times
            \bigcup_{i=0}^{k}N_{t_i}^{t_k}
            \rbr$
    be such that  $\cU(t_k,\hat{x})$ is non-empty. If $u\in\cU(t_k,\hat{x})$ is applied to the model~\eqref{eq:Mod}, then 
    $h(x(t;t_k,\hat{x},u))\in\cD_t$ for all $t\in [t_k,t_k+T]$, in particular $h(x(t_{k+1};t_k,\hat{x},u))\in\cD_{t_{k+1}}$.
    Furthermore, there exists~$i\leq k$ such that $[0, I_{n-m}] \Phi(\hat x)\in N_{t_i}^{t_k}$ and hence there exist $(\hat{\zeta},\hat{\eta})\in\cD_{t_i}\times{\dB}_{\xi}$ and $\zeta\in\cY_{\hat{\zeta}}([t_i,\infty))$ with
        $[0,I_{n-m}]\Phi(\hat{x})= \eta(t_{k};t_i,\hat{\eta},\zeta)$. Define $\tilde{\zeta}:[t_i,\infty)\to \R^{m}$ by 
    \[
        \tilde{\zeta}(t) :=
        \begin{cases}
           h(x(t;t_k,\hat{x},u)),&t\in[t_k,t_{k+1}]\\ 
           \zeta(t),&t\in[t_i,t_k)\cup(t_{k+1},\infty).
        \end{cases}
    \]
    Then, $\tilde{\zeta}\in\cY_{\hat{\zeta}}([t_i,\infty))$ and $\eta(t_{k+1};t_i,\hat{\eta},\tilde{\zeta})\in N_{t_i}^{t_{k+1}}$.
    Thus,
    \begin{align*}
        \Phi(x(t_{k+1};t_k,\hat{x},u)) =
        \begin{pmatrix} h(x(t_{k+1};t_k,\hat{x},u))\\ [0,I_{n-m}]\Phi(x(t_{k+1};t_k,\hat{x},u))\end{pmatrix} =\begin{pmatrix}
        h(x(t_{k+1};t_k,\hat{x},u)) \\ \eta(t_{k+1};t_i,\hat{\eta},\tilde{\zeta})\end{pmatrix}
        \in
            \cD_{t_{k+1}}\times
            N_{t_i}^{t_{k+1}}.
    \end{align*}
\noindent
\emph{Step three.} 
Since $\psi\in W^{1,\infty}(\Rp,\R)$ and $y_{\rf}\in W^{1,\infty}(\Rp,\R^m)$ are bounded,
the set $O:=\bigcup_{t\geq 0}\cD_{t}$ is bounded.
Thus, for all~$k\in\N_0$ and all~$\hat{\zeta}\in\cD_{t_k}$ every function
$\zeta\in\cY_{\hat{\zeta}}([t_k,\infty))$ is bounded.
Since $O\times{\dB}_{\xi}$ is bounded,  it follows from the BIBS condition~\eqref{eq:BIBO-ID} that the set 
$N:=\bigcup_{k\in\N_0}\bigcup_{t\geq t_{k}} N^{t}_{t_k}$ is also bounded. Then the set $K:=\overline{O\times N}$ is compact and
\[
    \fa T>0\fa k\in\N_0\fa (\hat{\zeta},\hat{\eta})\!\in\! \cD_{t_k}\times
        \bigcup_{i=0}^{k}N_{t_i}^{t_k}
        \fa \zeta\in\cY_{\hat{\zeta}}([t_k,t_k+T]) \fa t\in\![t_k,t_k+T]\!:
        (\zeta(t),\eta(t;t_k,\hat{\eta},\zeta))\!\in\! K.
\]
To see this, let $T>0$, $k\in\N_0$, and  $(\hat{\zeta},\hat{\eta})\in \cD_{t_k}\times \bigcup_{i=0}^{k}N_{t_i}^{t_k}$ be arbitrarily given. 
Then, there exists $i\leq k$ with $\hat{\eta}\in N_{t_i}^{t_k}$. By definition of $N_{t_i}^{t_k}$, there exist
$(\hat{\zeta}_0,\hat{\eta}_0)\in\cD_{t_i}\times{\dB}_{\xi}$ and $\zeta_0\in\cY_{\hat{\zeta}_0}([t_i,\infty))$ such that
$\hat{\eta}=\eta(t_k;t_i,\hat{\eta}_0,\zeta_0)$. Let $\zeta\in\cY_{\hat{\zeta}}([t_k,t_k+T])$, then 
$\zeta(t)\in\cD_t\subseteq O$ for all $t\in[ t_k,t_k+T]$.
Define $\tilde{\zeta}:[t_i,\infty)\to \R^{m}$ by 
\[
    \tilde{\zeta}(t) :=
    \begin{cases}
       \zeta(t),&t\in[t_k,t_k+T],\\ 
       \zeta_0(t),&t\in[t_i,t_k)\cup(t_k+T,\infty).
    \end{cases}
\]
Then, $\tilde{\zeta}\in\cY_{\hat{\zeta}_0}([t_i,\infty))$ and for $t\in[ t_k,t_k+T]$ we have
\[
    \eta(t;t_k,\hat{\eta},\zeta)=\eta(t;t_k,\eta(t_k;t_i,\hat{\eta}_0,\zeta_0),{\zeta})=\eta(t;t_i,\hat{\eta}_0,\tilde{\zeta})\in N_{t_k}^t \subseteq N.
\]

\noindent
\emph{Step four.} 
A straightforward adaption of~\cite[Prop.~4.9]{BergDenn21}, using the constructed compact set~$K$,
yields the existence of~$M>0$ such that for all~$k\in\N_0$ the set~$\cU(t_k,\hat{x})$ 
is non-empty if $\hat{x}\in\Phi^{-1}\rbl
        \cD_{t_k}\times
        \bigcup_{i=0}^{k}N_{t_i}^{t_k}
        \rbr$.

\noindent
\emph{Step five.}
We show by induction that
\[
    \fa k\in\N_0:\ x_k^{\rm pre}\in \Phi^{-1}
        \rbl
        \cD_{t_k}\times
        \bigcup_{i=0}^{k}N_{t_i}^{t_k}
        \rbr
\]
and  
\[
    \fa k\in\N_0 \fa t\in [t_k,t_{k+1}]:\quad h(x(t;t_k,\kappa_{\xi}(x_k^{\rm pre},\hat{y}_{k}),u^\star_{k}))\in\cD_t.
\]
Since~$x_0^{\rm pre}\in 
\setdef{x\in\R^n}
{
    h(x)\in\cD_{t_0},\ \Norm{[0,I_{n-m}]\Phi(x)} \le \xi
}$ by assumption, 
$ x_0^{\rm pre}\!\in 
  \Phi^{-1}
        \rbl
        \cD_{t_{0}}\times
        N_{t_{0}}^{t_{0}}
        \rbr$.
Due to observation~\ref{Item:PredictionInitVal} of~{Step two}, $\kappa_{\xi}(x_0^{\rm pre},\hat{y}_{0})\in 
  \Phi^{-1}
        \rbl
        \cD_{t_{0}}\times
        N_{t_0}^{t_{0}}
        \rbr$. 
Thus,  $\cU(t_0,\kappa_{\xi}(x_0^{\rm pre},\hat{y}_{0}))\neq\emptyset$ according to~{Step four}.
The optimization problem~\eqref{eq:Prop:OPC} has a solution $u^\star_0\in \cU(t_0,\kappa_{\xi}(x_0^{\rm pre},\hat{y}_{0}))$
because of observation~\ref{Item:SolutionOptimizationProb} in  Step two. Due to the definition of
$\cU(t_0,\kappa_{\xi}(x_0^{\rm pre},\hat{y}_{0}))$, this implies in particular
$h(x(t;t_0,\kappa_{\xi}(x_0^{\rm pre},\hat{y}_{0}),u^\star_{0}))\in\cD_t$ for all $t\in [t_0,t_1]$ 
and
\[
x_1^{\rm pre}=x(t_1;t_0,\kappa_{\xi}(x_0^{\rm pre},\hat{y}_{0}),u^\star_{0})\in
\Phi^{-1}
\rbl
    \cD_{t_{1}}\times
    \bigcup_{i=0}^{1}N_{t_i}^{t_{1}}
\rbr
\]
according to observation~\ref{Item:RecInitialVal} in Step two.\\
If $x_k^{\rm pre}\in \Phi^{-1}
            \rbl
            \cD_{t_k}\times
            \bigcup_{i=0}^{k}N_{t_i}^{t_k}
            \rbr$ for $k\in\N$, then $\kappa_{\xi}(x_k^{\rm pre},\hat{y}_{k})\in\Phi^{-1}
            \rbl 
            \cD_{t_k}\times
            \bigcup_{i=0}^{k}N_{t_i}^{t_k}
            \rbr$
due to observation~\ref{Item:PredictionInitVal} in Step two.
Thus,  $\cU(t_k,\kappa_{\xi}(x_k^{\rm pre},\hat{y}_{k}))\neq\emptyset$ according to Step four.
Because of observation~\ref{Item:SolutionOptimizationProb} in Step two,
the OCP~\eqref{eq:Prop:OPC} has a solution $u^\star_k\in \cU(t_k,\kappa_{\xi}(x_k^{\rm pre},\hat{y}_{k}))$.
By definition of
$\cU(t_k,\kappa_{\xi}(x_k^{\rm pre},\hat{y}_{k}))$, this results in 
$h(x(t;t_k,\kappa_{\xi}(x_k^{\rm pre},\hat{y}_{k}),u^\star_{k}))\in\cD_t$ for all $t\in [t_k,t_{k+1}]$ 
and 
\[
x_{k+1}^{\rm pre}=x(t_{k+1};t_k,\kappa_{\xi}(x_k^{\rm pre},\hat{y}_{k}),u^\star_{k})\in
\Phi^{-1}
\rbl
    \cD_{t_{k+1}}\times
    \bigcup_{i=0}^{k+1}N_{t_i}^{t_{k+1}}
\rbr
\]
according to observation~\ref{Item:RecInitialVal} in Step two.

\noindent
\emph{Step six.}
It follows from~{Step five} that $y_{\rm M}(t)\in\cD_t$ for all $t\in\Rp$.
Define $y_{{\rm M},k}:=y_{{\rm M}}|_{[t_k,t_{k+1}]}$
and $\hat{\eta}_k:=[0,I_{n-m}]\Phi(\kappa_{\xi}(x_k^{\rm pre},\hat{y}_{k}))$ for all $k\in\N_0$.
Due to the definition of the compact set~$K$ and since $y_{{\rm M},k}\in\cY_{y_{{\rm M},k}(t_k)}([t_k,t_{k+1}])$,
we have $(y_{{\rm M},k}(t),\eta(t;t_k,\eta_k,y_{{\rm M},k}))\in K$ for all $t\in[t_k,t_{k+1}]$ and all $k\in\N_0$.
The functions $y_{{\rm M},k}$ and $\eta(\cdot;t_k,\eta_k,y_{{\rm M},k})$ satisfy the differential equation~\eqref{eq:BIF}
on the interval~$[t_k,t_{k+1}]$ for all $k\in\N$, thus
\begin{align*}
    \esssup_{t \ge 0}\Norm{\dot{y}_{\rm M}(t)}
    &\leq \sup_{k\in\N_0}\esssup_{t \in[t_k,t_{k+1}]}\Norm{\dot{y}_{{\rm M},k}(t)}\\
    &= \sup_{k\in\N_0}\esssup_{t \in[t_k,t_{k+1}]}\Norm{p(y_{{\rm M},k}(t), \eta(t;t_k,\hat{\eta}_k,y_{{\rm M},k}))
        +\Gamma(\Phi^{-1}(y_{{\rm M},k}(t), \eta(t;t_k,\hat{\eta}_k,y_{{\rm M},k})) )u^\star_k(t) }\\
    &\leq \sup_{k\in\N_0}\esssup_{t \in[t_k,t_{k+1}]}\Norm{p(y_{{\rm M},k}(t), \eta(t;t_k,\hat{\eta}_k,y_{{\rm M},k}))}
        +\Norm{\Gamma(\Phi^{-1}(y_{{\rm M},k}(t), \eta(t;t_k,\hat{\eta}_k,y_{{\rm M},k})) )}\!\SNorm{u_k^\star }\\
    &\leq \max_{(y, \eta) \in K} \Norm{ p(y, \eta) }  + M \max_{(y, \eta) \in K} \Norm{\Gamma(\Phi^{-1}(y, \eta) )} =:\bar{\lambda}.
   \end{align*}
The last inequality holds 
for all choices of~$(\hat{y}_k)_{k\in\N_0}$, $\delta$, $x_0^{\rm pre}$, and~$\kappa_{\xi}$, which completes the proof.
\end{proof}

\subsection{Auxiliary funnel control results} \label{Sec:FCResults}
Before proving~\Cref{Prop:FC_dist}, concerning the application of funnel control, we state the following result.
\begin{lemma}\label{lemma:ActivationSurjective}
    Let $N\in\con(\Rp,\R)$ be a surjection, $\alpha\in\con([0,1),[1,\infty))$ be a bijection,
    and $\beta\in\con([0,1],[0,\beta^+])$ be an activation function with $\beta^+>0$.
    Then $\tilde{N}:=(\beta\circ\sqrt{\alpha^{-1}})\cdot N\in\con(\Rp,\R)$ is surjective.
\end{lemma}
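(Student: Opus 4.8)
The plan is to reduce surjectivity of $\tilde N$ to two facts: the scaling factor $\beta\circ\sqrt{\alpha^{-1}}$ is bounded away from $0$ near the funnel boundary, and a continuous surjection $\Rp\to\R$ stays surjective on every tail $[R,\infty)$. First I would fix the domain. Since $\alpha\in\con([0,1),[1,\infty))$ is a continuous bijection of half-open intervals, it is strictly increasing with $\alpha(0)=1$ and $\alpha(s)\to\infty$ as $s\to1^-$; hence $\alpha^{-1}\in\con([1,\infty),[0,1))$ is a continuous increasing bijection with $\alpha^{-1}(r)\to1$ as $r\to\infty$. Therefore $g:=\beta\circ\sqrt{\alpha^{-1}}\colon[1,\infty)\to[0,\beta^+]$ is continuous and $\tilde N=g\cdot N|_{[1,\infty)}$ is continuous on $[1,\infty)=\ran\alpha$, which is the only part of the domain relevant to the funnel control law, as the latter evaluates only $\tilde N\circ\alpha$ with $\alpha$ mapping into $[1,\infty)$; extending $\tilde N$ by the constant $\tilde N(1)$ on $[0,1)$ makes it an element of $\con(\Rp,\R)$ without changing its range. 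It thus remains to show $\tilde N([1,\infty))=\R$.

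Since $\alpha^{-1}(r)\to1$, continuity of $\beta$ gives $g(r)\to\beta(1)$ as $r\to\infty$. Here I use that the activation function is positive at its right endpoint, $\beta(1)>0$ (indeed $\beta(1)=\beta^+$ for the ReLU-type choice), which is exactly the property ensuring the scaled gain does not collapse at the boundary. Setting $\varepsilon:=\tfrac12\beta(1)>0$, I choose $R\ge1$ with $g(r)\ge\varepsilon$ for all $r\ge R$.

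The crux is to show that $N$ restricted to $[R,\infty)$ is still surjective onto $\R$. To this end I note that $N$ is continuous on the compact interval $[0,R]$, so $N([0,R])$ is bounded. Since $N([0,\infty))=\R$, the connected set $N([R,\infty))$ is a real interval whose union with the bounded set $N([0,R])$ is all of $\R$; hence $N([R,\infty))$ must be unbounded both above and below, and an interval unbounded in both directions equals $\R$. In particular $N|_{[R,\infty)}$ attains arbitrarily large positive and arbitrarily large negative values.

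Finally I combine the estimates: for $r\ge R$ one has $\tilde N(r)=g(r)N(r)\ge\varepsilon N(r)$ when $N(r)\ge0$ and $\tilde N(r)\le\varepsilon N(r)$ when $N(r)\le0$. Letting $N(r)\to+\infty$ and $N(r)\to-\infty$ along $[R,\infty)$ shows that $\tilde N$ is unbounded above and below there; as $\tilde N$ is continuous and $[R,\infty)$ is connected, $\tilde N([R,\infty))$ is an interval unbounded in both directions, i.e. $\tilde N([R,\infty))=\R$, which proves surjectivity. I expect the tail-surjectivity of $N$ in the third step to be the only non-routine point—resolved by boundedness of $N$ on the compact segment $[0,R]$—while the positivity $\beta(1)>0$ is the structural feature of the activation function on which the argument relies.
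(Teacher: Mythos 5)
Your proof is correct and follows essentially the same route as the paper, which reduces surjectivity to the equivalence ``$N$ is a continuous surjection $\Rp\to\R$ iff $\limsup_{s\to\infty}N(s)=\infty$ and $\liminf_{s\to\infty}N(s)=-\infty$'' together with the fact that the multiplier $\beta\circ\sqrt{\alpha^{-1}}$ tends to a positive limit at infinity; your tail-surjectivity step is exactly the unpacked forward direction of that equivalence, and your final step is its converse applied to $\tilde N$. You are in fact more careful than the paper on two points it silently glosses over: the extension of $\beta\circ\sqrt{\alpha^{-1}}$ from $[1,\infty)$ to all of $\Rp$, and the need for $\beta(1)>0$ (the paper asserts the limit equals $\beta^+$ without justification, implicitly assuming $\beta$ attains $\beta^+$ at the right endpoint).
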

\begin{proof}
$N\in\con(\Rp,\R)$ being a surjection is equivalent to 
$\limsup_{s\to \infty} N(s)=\infty$ and $\liminf_{s\to \infty} N(s)=-\infty$.
Since $\lim_{s\to \infty} (\beta\circ\sqrt{\alpha^{-1}})(s)=\beta^+>0$, we have
\[
    \limsup_{s\to \infty} \tilde{N}(s)=\infty\quad \text{ and }\quad \liminf_{s\to \infty} \tilde{N}(s)=-\infty.
\]
This implies that $\tilde{N}=(\beta\circ\sqrt{\alpha^{-1}})\cdot N\in\con(\Rp,\R)$ is surjective as well.
\end{proof}
\begin{proof}[\textbf{Proof of~\Cref{Prop:FC_dist}}]
    Using~\cref{lemma:ActivationSurjective} and the {perturbation} high-gain property from~\Cref{Def:system-class}~\ref{Item:high-gain-prop} it is a {straightforward modification of the proof of~\cite[Thm.~1.9]{BergIlch21}, using the perturbation high-gain property instead of the high-gain property, similar as in Step four of the proof of~\Cref{Thm:RFMPC}.}
\end{proof}

\subsection{Proof of the main result} \label{Sec:ProofMain}
Now we are in the position to present the proof of the main result \Cref{Thm:RFMPC}.
\begin{proof}[\textbf{Proof of~\Cref{Thm:RFMPC}}]
Let $\Phi$ be a diffeomorphism associated with the model $(f,g,h)$ according to \Cref{ass:BIF}.

\noindent
\emph{Step one.} We show that the set $X^0$ of initial values for the model is non-empty.
    To this end, let $z:=\Phi^{-1}(y^0(0),0_{n-m})$. Then, recalling $[I_m,0]\Phi(\cdot) = h(\cdot)$,
    we have $h(z)=y^0(0)$. 
    Therefore,  $\Norm{h(z) - y_{\rf}(0)}=\Norm{y^0(0)-y_{\rf}(0)}<\psi(0) $
    because $y^0(0)\in\cD_0$.
    Further, $ \Norm{y(0)-h(z)}= 0$ and  $\Norm{[0,I_{n-m}]\Phi(z)}=\Norm{[0,I_{n-m}]\Phi(\Phi^{-1}(y^0(0),0_{n-m}))} = 0 \leq\xi $.\
    Thus, $z\in X^0$.
    
\noindent
\emph{Step two.}
According to \Cref{Prop:OCP_has_solution} there exists~$M>0$ such that, for every 
$\hat x_0 \in X^0$ and every possible sequence of measurements~$(\hat{y}_k)_{k\in\N_0}$ with $\hat y_k\in\cD_{t_k}$ for all $k\in\N_0$,
the OCP~\eqref{eq:RobustFMPCOCP}  has a solution~$u_{k,\rm FMPC}$ for every~$k\in\N_0$ and initialization
$\hat x_k = \kappa_{\xi}(x^{\rm pre}_{k},\hat y_k)$ of the model~\eqref{eq:Mod}, where  $x^{\rm pre}_{k+1} = x(t_{k+1};t_k,\hat x_k, u_{k,\rm FMPC})$.

\noindent
\emph{Step three.} 
Now we turn towards the part where the funnel controller~\eqref{alg:eq:FC} is involved.
On each interval $[t_k,t_{k+1}]$ the system's dynamics are given by
\begin{equation} \label{eq:dot_yk}
    \dot y_k(t) = F(d(t),\oT(y_k)(t),u_k(t)), \quad y_k|_{[-\sigma,t_k]} = y_{k-1}|_{[-\sigma,t_k]},
\end{equation}
where $y_{-1}|_{[-\sigma,0]} := y^0$, and in particular $y_k(t_k) = y_{k-1}(t_k)$, i.e., although the model's state is updated at $t = t_k$, the system is not re-initialized at the time instances $t_k$.
The funnel control signal is, for $k \in \N_0$ and $t \in [t_k,t_{k+1}]$, given by
\begin{equation*} 
    u_{k,\rm FC}(t) =  \beta(\|y_k(t) - y_{k,\rm M}(t)\|/\vp_k(t)) (N \circ \alpha)(\|  (y_k(t) - y_{k,\rm M}(t))/\vp_k(t) \|^2)  (y_k(t) - y_{k,\rm M}(t))/\vp_k(t) , 
\end{equation*}
where 
$y_{k,\rm M}(t) = h(x(t;t_k,\kappa_\xi(x_k^{\rm pre},y_k(t_k)),u_{k,\rm FMPC}))$ for $t \in [t_k,t_{k+1}]$
and the funnel function for the funnel control law is piecewise defined by
\begin{align*}
    \vp_k : [t_k,t_{k+1}] &\to \R, \quad
    t  \mapsto {\psi(t) - \|y_{k,\rm M}(t) - y_{\rm ref}(t)\|}, \quad k \in \N_0.
\end{align*}
Invoking \Cref{Prop:OCP_has_solution}, we have 
$y_{k,\rm M}(t) \in \cD_t$ for all~$t\in [t_k,t_k+T]$, 
by which $\vp_k$ satisfy $0 < \vp_k(t) \le \psi(t)$ for all $t \in [t_k,t_{k+1}]$ and all $k \in \N_0$.
Every $\vp_k$ can smoothly be extended to the left and right such that the extension $\tilde \vp_k$ satisfies $\tilde \vp_k \in \cG$ for all $k \in \N_0$.
We show that the control law
\begin{equation}\label{eq:uk}
u_k(t) = u_{k,\rm FMPC}(t) +  u_{k,\rm FC}(t)
\end{equation}
applied to the system~\eqref{eq:dot_yk} for $k\in\N_0$, leads to a closed-loop system which has a global solution with the properties as in \Cref{Prop:FC_dist}.
Special attention is required since $y_{k,\rm M}(t_k) \neq y_{k-1,\rm M}(t_k)$ and hence also $\vp_k(t_k)\neq \vp_{k-1}(t_k)$ is possible.
We observe that for $x^0 \in X^0$ we have 
\begin{align*}
&\| y_{0,\rm M}(0) - y_0(0)\| = \| h(x^0) - y_0(0) \|  < \psi(0) - \| h(x^0) - y_{\rm ref}(0) \| = {\vp_0(0)}
\end{align*}
and $y_0(0) \in \cD_0$.
Then \Cref{Prop:OCP_has_solution} yields $ \| u_{0,\rm FMPC} \|_\infty \le M$. 
Thus,
the feasibility result \Cref{Prop:FC_dist} for the funnel controller is applicable and yields the existence of a solution $y_0 : [0,t_1] \to \R^m$ of the closed-loop problem~\eqref{eq:dot_yk},~\eqref{eq:uk} for $k=0$, with $ \| y_{0,\rm M}(t) - y_0(t)\| < \vp_0(t)$ for all $t\in[t_0,t_1]$.
Then, choosing $\hat x_1 = \kappa_\xi(x_1^{\rm pre},y_{0}(t_1))\in \Omega_{\xi}(x_1^{\rm pre},y_{0}(t_1)) $, at $t=t_1$ 
we have either $ \hat x_1 = x_1^{\rm pre}$, which gives $y_{1,\rm M}(t_1) = y_{0,\rm M}(t_1)$ and thus
\begin{equation*}
\| y_{1,\rm M}(t_1) - y_1(t_1) \| = \| y_{0,\rm M}(t_1) - y_0(t_1) \| < \vp_0(t_1) =\psi(t_1) - \| y_{0,\rm M}(t_1) - y_{\rm ref}(t_1) \| = \vp_1(t_1),
\end{equation*}
or $y_{1,\rm M}(t_1) = h(\hat{x}_1) = y_0(t_1) = y_1(t_1)$ and the estimation above is valid as well, thus $y_1(t_1) \in \cD_{t_1}$. \Cref{Prop:OCP_has_solution} yields $\| y_{1,\rm M}(t) - y _{\rm ref}(t)\| < \psi(t)$ for $t \in [t_1,t_2]$ with $\|u_{1,\rm FMPC}\|_\infty \le M$, by which the conditions to reapply \Cref{Prop:FC_dist} are satisfied at $t = t_1$, by which a solution $y_1 : [t_1,t_2] \to \R^m$ of the closed-loop problem~\eqref{eq:dot_yk},~\eqref{eq:uk} exists for $k=1$, with $ \| y_{1,\rm M}(t) - y_1(t)\| < \vp_1(t)$ for all $t\in[t_1,t_2]$. Repeating this line of arguments we successively obtain, for each $k\in\N_0$, a solution $y_k : [t_k,t_{k+1}] \to \R^m$ of the closed-loop problem~\eqref{eq:dot_yk},~\eqref{eq:uk} with $ \| y_{k,\rm M}(t) - y_k(t)\| < \vp_k(t)$ for all $t\in[t_k,t_{k+1}]$.

\noindent
\emph{Step four.} 
By defining $y:[-\sigma,\infty)\to\R^m$ via $y|_{[-\sigma,0]} = y^0$, $y|_{[t_k,t_{k+1}]} = y_k$ for $k\in\N_0$ we obtain a global solution of~\eqref{eq:Sys}, \eqref{eq:u} which satisfies 
$    \|y_{\rm M}(t) - y(t)\| < \psi(t) - \|y_{\rm M}(t) - y_{\rf}(t)\| = \vp(t)$
 for all $t\ge 0$, where $y_{\rm M}(t) := y_{k,\rm M}(t)$ and $\vp(t):= \vp_k(t)$ for $t\in[t_k,t_{k+1})$, $k\in\N_0$. It remains to show that the overall control $u(t) := u_k(t),\qquad t\in[t_k,t_{k+1}),\quad k\in\N_0$, is bounded, which we prove by  showing that there exists $\ve \in(0, 1)$ 
such that $\| y(t) - y_{\rm M}(t) \| \le \ve \vp(t)$ for all $t \ge 0$. 
For the sake of better legibility, we introduce the variable $w(t) :=  (y(t) - y_{\rm M}(t))/\vp(t)$. 
Choose compact sets $K_p\subset\R^p$ and $K_q\subset\R^q$ such that $d(t) \in K_p$ and $\oT(y)(t) \in K_q$ for $t \ge 0$.
Further, for $K_{m} := \setdef{D\in\R^m}{\|D\|\le M}$, $\nu \in (0,1)$ and $V := \setdef{ v \in \R^m}{ \nu \le \|v\| \le 1}$ 
we recall the continuous function from~\Cref{Def:system-class}~\ref{Item:high-gain-prop}
\begin{equation*}
    \chi(s) = \min \setdef{ \langle v, F(\delta,\zeta,\Delta -s v) \rangle }{ \delta \in K_{p}, \zeta \in K_q,\Delta\in K_m, v \in V }.
\end{equation*}
$F$ has the {perturbation} high-gain property and hence the function $\chi$ is unbounded from above for a suitable $\nu\in(0,1)$.
We note that $\|w(0)\|<1$ as shown in Step~three, 
and with $\lambda : = \| \dot \psi \|_\infty + \| \dot y_{\rm ref}\|_\infty$ and $\bar \lambda \ge \|\dot y_{\rm M}\|_\infty$ from \Cref{Prop:OCP_has_solution},
we {choose $\ve\in(0,1)$ large enough such that $\ve > \max\{\nu, \|w(0)\|\}$ and
$
    \chi( \beta(\ve) (N\circ \alpha)(\ve^2) )  \ge 4 \bar \lambda + 2 \lambda,
$
which is possible because of the} properties of $\beta,N,\alpha$ and $\Tilde{\chi}$.
We show that $\|w(t)\| \le \ve$ for all~$t \ge 0$. 
Unlike the standard funnel control framework, the funnel function $\vp$ may have discontinuities at the time instances $t_k$ when the model is re-initialized with
$\hat x_k \in \Omega_{\xi}(x_{k}^{\rm pre}, y(t_k))$ such that $h(\hat x_k) = y(t_k)$.
This fact requires particular attention when proving $\|w(t)\| \le \ve$ for all~$t \ge 0$.
We observe that $\vp$ is continuous on $[t_k,t_{k+1}]$ for all $k\in\N_0$ and satisfies, by \Cref{Prop:OCP_has_solution},
\begin{equation*}
 \begin{aligned}
   | \dot \vp(t) | &  \le | \dot \psi(t) | + \| \dot y_{\rm M}\| + \| \dot y_{\rm ref}(t) \| 
   \le \lambda + \bar \lambda
\end{aligned}  
\end{equation*}
for almost all $t \ge 0$, independent of $k$. Now fix an arbitrary $k \in \N_0$ and consider two cases. \\
\textit{Case 1} : If $\hat x_k \in \Omega_{\xi}(x_{k}^{\rm pre}, y_{k-1}(t_k))$ is such that $h(\hat x_k) = y_{k-1}(t_k)$, then {$y_{\rm M}(t_k) = y(t_k)$ and hence} $\|w(t_k) \| = 0 < \ve$.
Seeking a contradiction, we suppose that there exists $t^* \in {(t_k, t_{k+1}]}$ such that {$\| w(t^*)\| > \ve$,} and invoking continuity of $w$ on $[t_k,t_{k+1}]$ we set $t_* := \sup\setdef{ t\in [t_k,t^*)}{ \|w(t)\| = \ve} < t^*$.
Then {we have $\|w(t)\|\ge \ve \ge \nu$ (and hence $w(t)\in V$) for all $t \in [t_*,t^*]$ and, since $\|w(t_*)\| = \ve$,
$ \chi(\beta(\|w(t_*)\| )(N \circ \alpha)(\|w(t_*)\|^2)) \ge 4 \bar \lambda + 2 \lambda. $
Therefore, there exists $t^{**}\in (t_*,t^*]$ such that
\[ 
   \forall\, t\in [t_*,t^{**}]:\   \chi(\beta(\|w(t)\| )(N \circ \alpha)(\|w(t)\|^2)) \ge 2 \bar \lambda + \lambda.
\]
Then we calculate that, for almost all $t \in [t_*,t^{**}]$,}
\begin{small}
\begin{align*}
    \ddt \tfrac{1}{2} \| w(t)\|^2 &= \langle w(t), \dot w(t) \rangle 
     = \left\langle w(t), \frac{-\dot \vp(t) (y(t) - y_{\rm M}(t)) + \vp(t) (\dot y(t) - \dot y_{\rm M}(t))}{\vp(t)^2} \right\rangle \\
& = - \frac{\dot \vp(t)}{\vp(t)} \langle w(t) , w(t) \rangle - \frac{1}{\vp(t)} \langle w(t), \dot y_{\rm M}(t) \rangle + \frac{1}{\vp(t)} \langle w(t) , F(d(t), \oT(y)(t), u(t)) \rangle \\
& < \frac{1}{\vp(t)} \Big( | \dot \vp(t) | + \|\dot y_{\rm M}(t)\| + \langle  w(t) , F(d(t), \oT(y)(t), u(t)) \rangle \Big) \\
&\le \frac{1}{\vp(t)} (\lambda + 2 \bar \lambda)
+
 \frac{1}{\vp(t)} \langle  w(t) , F\big(d(t), \oT(y)(t), u_{k,\rm FMPC}(t) + u_{k,\rm FC}(t)\big)\rangle  \\
& \le \frac{1}{\vp(t)} (\lambda + 2 \bar \lambda)
- \frac{1}{\vp(t)} \min \setdef{ \!\! \langle v, F(\delta,\zeta, \!\Delta-\! \beta(\|w(t)\|) (N \! \circ \! \alpha)(\|w(t)\|^2) v) \rangle \!\! }{  \! \delta \!\in\! K_{{p}}, \zeta \!\in\! K_q, \Delta\!\in\! K_m, v \!\in\! V \!\!} \\
& \le \frac{1}{\vp(t)} \Big( \lambda + 2 \bar \lambda -  \chi(\beta(\|w(t)\| )(N \circ \alpha)(\|w(t)\|^2)) \Big) 
 \le 0,
\end{align*}%
\end{small}%
where we used $u_{, \rm FC}(t) = \beta(\|w(t)\|) (N\circ \alpha)(\|w(t)\|^2) w(t)$ in the penultimate inequality.
Upon integration, and invoking the definition of $t^* < t^{**}$, this gives
$
    \ve < \| w(t^{**})\| \le \| w(t_*)\| = \ve
$,
a contradiction.
Therefore, $\| w(t) \| \le \ve$ for all $t \in [t_k, t_{k+1}]$. \\
\textit{Case 2}: If $\hat x_k = x_k^{\rm pre}$, then $\vp_{k-1}(t_k) = \vp_k(t_k)$ and thus the funnel function $\vp$ is continuous and weakly differentiable on the interval $[t_{k-1},t_{k+1}]$.
In this case, it follows that $\|w(t) \| \le \ve$ for all $t \in [t_{k-1}, t_{k+1}]$ with the same arguments as in Case~1.\\
Overall, we have shown that $\|w(t)\| \le \ve$ for all $t \in [t_k,t_{k+1}] $ and all $k \in \N_0$, independent of the initialization strategy. Therefore,
$\| u \|_\infty \le M + \beta^+ |(N \circ \alpha)(\ve^2)|$ and this proves assertion~\ref{Assertion:y_u_bounded}.

\noindent
\emph{Step five.} Finally, a simple calculation yields that for $ t \ge 0$ we have
\begin{align*}
 \ \| y(t) - y_{\rm ref}(t) \| 
 &= \| y(t) - y_{\rm M}(t) + y_{\rm M}(t) - y_{\rm ref}(t) \| 
  \le \| y(t) - y_{\rm M}(t) \| + \| y_{\rm M}(t) - y_{\rm ref}(t) \| \\
 & < \vp(t) + \| y_{\rm M}(t) - y_{\rm ref}(t) \| 
  = \psi(t) - \| y_{\rm M}(t) - y_{\rm ref}(t) \| + \| y_{\rm M}(t) - y_{\rm ref}(t) \| = \psi(t),
\end{align*}
which is assertion~\ref{Assertion:tracking_error}. 
This completes the proof.
\end{proof}

\bibliographystyle{plain}
\footnotesize
\bibliography{\References}

\end{document}